\numberwithin{equation}{section}
\theoremstyle{plain}
\newtheorem{theorem}{Theorem}[section]
\newtheorem{corollary}[theorem]{Corollary}
\theoremstyle{definition}
\newtheorem{definition}[theorem]{Definition}
\newtheorem{example}[theorem]{Example}
\theoremstyle{remark}
\newtheorem{remark}[theorem]{Remark}
\newtheorem{case[theorem]}{Case}
\title{Limits of Discrete Energy of Families of Increasing Sets} 
\author{Hari Nathan}
\date{January 2025}
\begin{document}

\begin{abstract}
The Hausdorff dimension of a set can be detected using the Riesz energy. Here, we consider situations where a sequence of points, $\{x_n\}$, ``fills in'' a set $E \subset \mathbb{R}^d$ in an appropriate sense and investigate the degree to which the discrete analog to the Riesz energy of these sets can be used to bound the Hausdorff dimension of $E$. We also discuss applications to data science and Erd\H{o}s/Falconer type problems. 
\end{abstract}

\maketitle

\tableofcontents

\section{Introduction and Notation}
\label{section:introduction_and_notation}

\subsection{Introduction}
\label{subsection:introduction}

The Hausdorff dimension of some $E \subset \mathbb{R}^d$ can be defined as (see e.g. \cite{Mattila2015})

$$dim_{\mathcal{H}}(E) = \sup \{s : \text{there is a measure, } \mu \in \mathcal{M}(E)\text{, such that }I_s(\mu) < \infty\}$$

where $\mathcal{M}(E)$ is the set of all Borel probability measures on $E$ and, 

$$I_s(\mu) = \int_E \int_E |x - y|^{-s}d\mu(x)d\mu(y),$$

is known as the Riesz energy of the measure $\mu$. In \cite{Iosevich2014}, a theory of Hausdorff dimension for sequences of finite sets is introduced.\footnote{The article also introduces a theory of Minkowski dimension, but we will not discuss this here.} In particular, given a sequence of sets, $\{P_n\}$, where each $P_n \subset \mathbb{R}^d$ and $\# P_n = n$ is said to be $s$-adaptable if

$$J_s(P_n) = \frac{1}{n(n - 1)}  \sum_{\substack{x, y \in P_n \\x \neq y}} | x - y |^{-s}$$

is uniformly bounded (where $| \cdot |$ is the Euclidean distance). In such a case, the dimension of the sequence is defined as

$$dim_{\mathcal{H}}\{P_n\} = \sup \{s : \{P_n\} \text{ is } s\text{-adaptable}\}.$$

This theory allows one to adapt certain results about continuous problems to the discrete context. For example, in \cite{Iosevich2014} it is shown that, in certain circumstances, if the Falconer distance conjecture is true, then the Erd\H{o}s distance conjecture is also true. In \cite{Senger2011} these results are extended to pairs of distances among triples of points, dot products, etc. 

However, as of yet, no direct connection between $J_s$ and $I_s$ has been discussed. In this paper, we investigate this connection in the following way. Let $E \subset \mathbb{R}^d$ as above, $\{x_n\} \subset E$, and $P_n = \{x_1, ..., x_n\}$. Our goal is to understand, under various circumstances, how

$$\lim_{n \to \infty} J_s(P_n)$$

relates to $dim_{\mathcal{H}}(E)$.

The rest of the article is structured as follows. In the rest of this section, we introduce the notation used throughout the article and discuss connections to data science and other notions of fractal dimension. In section \ref{section:non_randomly_drawn_points} we show how we can use sequences of points to find a lower bound of the Hausdorff dimension of a set. In section \ref{section:randomly_drawn_points} we look at randomly drawn points. Finally, in section \ref{section:connections_to_other_problems} we discuss applications of this theory to Erd\H{o}s/Falconer type problems.

\subsection{Notation}
\label{subsection:notation}

We will use the following standard notation.

\begin{definition}
\label{definition:space_of_probability_measures}
Given a set $E \subset \mathbb{R}^d$, we let $\mathcal{M}(E)$ be the set of all \textbf{Borel probability measures with support in $E$}.
\end{definition}

\begin{definition}
\label{definition:indicator function}
For any set $A$ we define the \textbf{indicator function} as
$$\mathds{1}_A(x) = \begin{cases}
    1 & x \in A \\
    0 & x \notin A.
\end{cases}$$
\end{definition}

\begin{definition}
\label{definition:riesz_kernel_and_enegy}
For any (Borel) measure, $\mu$ on $\mathbb{R}^d$ and $s \geq 0$ we denote \textbf{Riesz potential}, $V_{s}:\mathbb{R}^d \to \mathbb{R}$, as

\begin{equation}
\label{equation:riesz_kernel}
V_s(x; \mu) = V_s(x) = \int_{\mathbb{R}^d} |x - y|^{-s} d\mu(y)
\end{equation}

and the \textbf{Riesz $s$-energy} of $\mu$ as

\begin{equation}
\label{equation:riesz_energy}
I_s(\mu) = \int_{\mathbb{R}^d} V_s(x)d \mu(x) = \int_{\mathbb{R}^d} \int_{\mathbb{R}^d} |x - y|^{-s} d\mu(y) d\mu(x).
\end{equation}
\end{definition}

\begin{definition}
\label{definition:hausdorff_dimension}
Given a measure $\mu$ on $\mathbb{R}^d$ we denote its \textbf{Sobolev dimension} as (see e.g. \cite{Fraser2025-dy})

$$dim_{\mathcal{S}}(\mu) = sup\{s \geq 0 : I_s(\mu) < \infty \}.$$

Given a set, $E \subset \mathbb{R}^d$, we denote it's \textbf{Hausdorff dimension} as $dim_{\mathcal{H}}(E)$ as\footnote{The definition given here is equivalent to the ``standard'' definition in terms of the Hausdorff measure see e.g. \cite{Mattila2015}.}

\begin{equation}
\label{equation:hausdorff_dimension}
dim_{\mathcal{H}}(E) = \sup \{\min\{dim_{\mathcal{S}}(\mu), d\} : \mu \in \mathcal{M}(E) \}.
\end{equation}
\end{definition}

\begin{remark}
\label{remark: sovolev_dimensio_definitions}
It is known (see e.g. \cite{Mattila2015}) that

\begin{equation}
\label{equation:riesz_energy_fourier_form}
I_s(\mu) = \gamma(d, s) \int_{\mathbb{R}^d} | \widehat{\mu}(\xi)|^2 \cdot |\xi|^{s - d} dx.
\end{equation}

where $\widehat{\mu}$ is the Fourier transform of $\mu$ (see e.g. \cite{Mattila2015}) and $\gamma(d, s)$ is a constant that only depends on $d$ and $s$.

The definition of Sobolev dimension normally uses the Riesz energy in this form which allows for dimensions higher than that of the ambient space (see e.g. \cite{Fraser2025-dy}). However, the two formulations for Riesz energy agree for all $0 \leq s \leq d$ and we use the formulation given here since, as we will show, it can be directly estimated using sampling.
\end{remark}

\begin{remark}
\label{remark: other_fractal_dimensions_of_measures}
It is worth briefly distinguishing the definition of $dim_{\mathcal{S}}(\mu)$ from other fractal dimensions of measures. Section 10.1 of \cite{Falconer1997-ot} defines two other notions related to Hausdorff dimension. 

The (lower) Hausdorff dimension is defined as

$$dim_\mathcal{H}(\mu) = \inf\{dim_{\mathcal{H}}(E) : E \text{ is Borel and } \mu(E)  > 0\}.$$

\cite{Sahlsten2020FourierTA} discusses measures with Sobolev dimension less than the (lower) Hausdorff dimension.

The  upper Hausdorff dimension is defined as

$$dim^*_{\mathcal{H}}(\mu) = \inf\{ dim_{\mathcal{H}}(E) : E \text{ is Borel and } \mu(E^c) = 0\}.$$

It is easy to see that the Sobolev and upper Hausdorff dimensions do not match - one need merely take a linear combination of Lebesgue measures of different dimension.
\end{remark}

In addition, we introduce the following notation and definitions which are slight variations on those in \cite{Iosevich2014}.

\begin{definition}
\label{definition:discrete_riesz_energy}
Given a set of $n$ points, $P \subset \mathbb{R}^d$ and $s > 0$, we define the \textbf{discrete $s$-Energy} of $P$ as

\begin{equation}
\label{equation:discrete_riesz_energy}
J_s(P) = \frac{1}{n(n - 1)} \sum_{\substack{x, y \in P \\x \neq y}} |x - y|^{-s}
\end{equation}

where $| \cdot |$ is taken to be the Euclidean distance.
\end{definition} 

\begin{definition}
\label{definition:s_adaptable}
A uniformly bounded sequence of sets, $\{P_n\}$, such that $\# P_n = n$ is \textbf{(Hausdorff) $s$-adaptable} if for all $t < s$

$$J_t(P_n) \lesssim_t 1$$

where $X(n) \lesssim_a Y(n)$ means that there is a finite constant, $C_a$, that depends only on $a$ such that, for all $n$, $X(n) \leq C_a Y(n)$.
\end{definition}

\begin{remark}
\label{remark: equivalent_definitions}
The definition of being $s$-adaptable in Definition \ref{definition:s_adaptable} is essentially equivalent to Definition 2.8 in \cite{Iosevich2014}. In \cite{Iosevich2014} the sets $\{P_n\}$ are required to be $1$-separated instead of uniformly bounded and must satisfy the property that $diam(P_n) \lesssim n^{-1/s}$ (where $diam(E)$ is the diameter of $E$). In such cases, \cite{Iosevich2014} denotes $\{P_n\}$ $s$-adaptable if

$$n^{-2} \sum_{\substack{a, b \in P_n \\a \neq b}} \left(\frac{|a - b|}{diam(P_n)}\right)^{-t} \lesssim_t 1.$$

In our definition, we require the $P_n$ to be uniformly bounded and so $diam(P_n)$ is also uniformly bounded.   
\end{remark}

\begin{example}
\label{example: grid}
Let $\{P_n\}$ be subsets of $[0, 1]$ with

$$P_n = \left\{ \frac{m}{n + 1} : m \in [n] \right\}$$

where $[n] = \{1, ..., n\}$. For $t \in [0, 1)$


\begin{align*}
    J_t(P_n)
    & = \frac{1}{n(n-1)} \sum_{m_1 \in [n]} \sum_{\substack{m_2 \in [n] \\ m_2 \neq m_1}} \left| \frac{m_1 - m_2}{n + 1} \right|^{-t} \\
    & = \frac{2}{n(n-1)} \sum_{m_1 = 1}^{n} \sum_{r = 1}^{m_1 - 1} \left| \frac{r}{n + 1} \right|^{-t} \\
    & \leq \frac{2}{n(n-1)} \sum_{m_1 = 1}^{n} \int_{0}^{m_1} \frac{r^{-t}}{(n+1)^{-t}}dr \\
    & = \frac{2(n+1)^t}{n(n-1)(1-t)} \sum_{m_1 = 1}^{n} m_1^{1 - t} \\
    & \leq \frac{2(n+1)^t}{n(n-1)(1-t)} \int_{0}^{n+1} m_1^{1 -t} dm_1 \\
    & = \frac{2(n+1)^{2}}{n(n-1)(1-t)(2-t)}
\end{align*}

As such, $\{P_n\}$ is $1$-adaptable. In addition, as we shall see below,
\end{example}

\begin{example}
\label{example: cantor set}
(\cite{Betti2025-ny}, Theorem 10)
Start with the interval $[0, 1]$ and positive integers $m, n \in \mathbb{N}$
such that $0 <  m < n$. Divide $[0, 1]$ into $n$ sub-intervals of equal length, and choose $m$ of those intervals. Let $C^{1}_{m, n}$ be the set of endpoints of the intervals chosen. At the $k$-th step, split each of the remaining intervals into $n$ equal sub-intervals, choose m of those intervals, and let $C^{k}_{m, n}$ be the set of endpoints of the intervals chosen. At each step, we pick
the same $m$ sub-intervals from the remaining intervals. Since the $m$ sub-intervals we choose to keep are arbitrary, the set $C^{k}_{m,n}$ is not unique and merely denotes one set satisfying these conditions. In addition, let $C_{m, n} = \lim_{k \to \infty} C^{k}_{m, n}$ (which exists since $C^{k + 1}_{m, n} \supset C^{k}_{m, n}$).

Now, for some $d \in \mathbb{N}$ we take natural numbers $\{m_i\}_{i = 1}^{d}$ and $\{n_i\}_{i = 1}^{d}$ with $0 < m_i < n_i$ we take the set

$$A^k = \prod_{i = 1}^{d}C^{k}_{m_i, n_i} \subset [0, 1]^d$$

and

$$A = \prod_{i = 1}^{d} C_{m_i,n_i}.$$

It is shown in \cite{Betti2025-ny} that $\{A^k\}$ is $s$-adaptable for

$$s = \sum_{i = 1}^{d} \frac{\ln m_i}{\ln n_i} = dim_{\mathcal{H}} \left(A\right).$$
\end{example}

\begin{definition}
Given a sequence of points, $\{x_n\}$ we say it \textbf{generates} the sequence of sets $\{P_n\}$ if $P_n = \{x_1, ..., x_n\}$.
\end{definition}

Notation and definitions only needed for a specific section will be introduced therein.

\subsection{Relationship to Data Science}

Before proceeding with out technical results, we discuss the relationship of this work with data science. Indeed, this relationship was a primary motivation for this work. The ``curse of dimensionality'', a term coined by Richard Bellman in \cite{Bellman2016-hu}, refers to phenomena that occur with data sets in high dimensional spaces. For example, as pointed out in \cite{Alonso2015-zs} in the context of nearest-neighbor algorithms

\begin{quote}
as the dimensionality of the data increases, the longest and shortest distance between points tend to become so close that the distinction between ”near” and ”far” becomes meaningless.
\end{quote}

Another manifestation (see e.g. \cite{Hastie2009-km}) can be seen by considering a data set as $N$ points in $[0, 1]^d$. In this framework, in order for a machine learning algorithm to perform well, one might want $k$ points in each hyper-cube with side length $\delta$ for $\delta \ll 1$. Thus, one needs $\sim k \delta^d$ data points.

These problems arise naturally in fields such as natural language processing where documents are often represented as word vectors and, as such, are extremely high-dimensional (see e.g. \cite{Karlgren2008-ye}). 

It is thus of great interest in data science to be able to estimate the ``true'' dimension of a data set. \cite{Gneiting2012-yi} surveys various methods for doing so but provides no theoretical guarantees for any of these methods. \cite{Alonso2015-zs} approaches this issue by defining a non-trivial notion of Hausdorff dimension for finite sets and show that, under certain conditions, one gets convergence to the Hausdorff dimension.

\cite{Betti2025-ny} proposed using the definition of Hausdorff dimension similar the one we discuss in this paper. They propose defining the Hausdorff dimension of a family of points sets, $\{P_n\}$ in $\mathbb{R}^d$ as\footnote{Actually,\cite{Iosevich2014} and \cite{Betti2025-ny} use a slight variation of $J_s$ where they divide by $n^{-2}$ instead of $(n(n-1))^{-1}$ though this is clearly asymptotically irrelevant.}

$$dim_\mathcal{H}\left( \{P_n\} \right) = \sup \left\{ s \in [0, d] : \sup_{n} J_s(P_n) < \infty \right\}.$$

Of particular relevance to this paper, they proved that if there is a compact set, $E$, such that the $P_n \subset E$ then $dim_\mathcal{H}\left( \{P_n\} \right)$ is bounded above by the upper Minkowski dimension of $E$. In addition, they proved various results relating $J_s(P_n)$ to the possibility of \textit{dimensionality reduction}.

Dimensionality reduction refers to a wide family of techniques that represent high dimensional data as lower dimensional data while (hopefully) retaining important features of the data. For example, one could project points $x_1, ..., x_n \in \mathbb{R}^d$ onto some $d'$-plane for $d' < d$. The hope is that by finding the correct plane (for the given data set) the projected points would retain sufficient explanatory power for the task at hand.The data science literature has many other, and generally more effective, dimensionality reduction techniques (see e.g. \cite{Shalev-Shwartz2014-vb}).

\subsection{Connections to Other Fractal Dimensions}

In this subsection, we briefly look connections to the general theory of fractal dimension.

For $E \subset \mathbb{R}^d$ let $N(\ell)$ be the minimum the number of boxes of side length $\ell$ needed the cover $E$. The Minkowski dimension (also known as the Minkowski–Bouligand dimension or box-counting dimension) of $E$ is defined as

$$dim_{\mathcal{M}}(E) = \lim_{\ell \to 0} \frac{\log N(\ell)}{\log(1/\ell)}$$

where the limit exists. The Minkowski dimension has been used in geology (e.g. \cite{Husain2021-ya}), medicine (e.g. \cite{Ferreira2006-th}), and  astronomy (e.g. \cite{Caicedo-Ortiz2015-gk}) (among other applications). The Minkowski dimension is convenient for applications because one gets the same result if $N(\ell)$ is replaced with estimates of $N(\ell)$ (e.g. the number standard dyadic boxes that intersect $E$) which are often quite easy to compute. The theorems proved in this paper can, hopefully, provide similarly practical methods to compute the Hausdorff dimension of data sets.

In a different direction, \cite{Fraser2024-ab} has recently introduced the Fourier spectrum which generalizes the s-energy in order to connect the notions of Hausdorff and Fourier dimensions. It generalizes formula (\ref{equation:riesz_energy_fourier_form}) energy with a parameter $\theta \in [0, 1]$ to

\begin{equation}
\label{equation:fourier_spectrum_fourier_form}
\mathcal{J}_{s, \theta}(\mu) =  \left( \int_{\mathbb{R}^d} | \widehat{\mu}(\xi) |^{2/\theta} \cdot |\xi|^{s/\theta - d} dx\right)^{\theta}
\end{equation}

where, by convention,

$$\mathcal{J}_{s, 0}(\mu) = \sup_{\xi \in \mathbb{R}^d} | \widehat{\mu}(\xi)|^2 \cdot |\xi|^{s}.$$

Similarly to definition \ref{definition:hausdorff_dimension}, \cite{Fraser2024-ab} defines the Fourier Spectrum of $\mu$ at $\theta$ as

$$dim_F^{\theta}(\mu) = sup\{s \geq 0 : \mathcal{J}_{s, \theta}(\mu) < \infty \}$$

and, for $E \subset \mathbb{R}^d$

$$dim_{E}^{\theta} = \sup\{\min\{ dim_F^{\theta}(\mu), d \} : supp(\mu) \subset E \}.$$

Our work in this paper directly uses the form of the equation for Riesz energy in equation (\ref{equation:riesz_energy}) as opposed to the form in equation in (\ref{equation:riesz_energy_fourier_form}). As such, it seems difficult to apply directly to the Fourier spectrum as defined in equation (\ref{equation:fourier_spectrum_fourier_form}). However, if we can find a form of equation (\ref{equation:fourier_spectrum_fourier_form}) similar to that of (\ref{equation:riesz_energy}), then the methods here might apply to this as well.

\section{Points Which Converge to a Measure}
\label{section:non_randomly_drawn_points}

\FloatBarrier

In this section we discuss situations where a sequence of points, $\{x_n\}$, in some sense, converge to a measure $\mu \in \mathcal{M}(E)$ for some $E \subset \mathbb{R}^d$. For example, if $E = [0, 1]^d$ and the $\{x_n\}$ represented points on increasingly fine grids of $[0, 1]^d$ such that for $k \in \mathbb{N}$

$$P_{2^{kd}} = \left\{\left(\frac{(i_1,..., i_d)}{2^k}\right) : 0 \leq i_1, ..., i_d < 2^k\right\}$$

(see Figure \ref{figure:grid}). In this case, if we let $\mu$ be the Lebesgue measure on $[0, 1]^d$, we might expect that $J_s(P_{2^{kd}}) \to I_s(\mu)$ as $k \to \infty$.

\begin{figure}
\begin{tikzpicture}
\foreach \x in {0,1,...,3} {
    \foreach \y in {0,1,...,3} {
        \fill[color=black] (0.5 * \x, 0.5 * \y) circle (0.04);
    }
}

\draw[thick,->] (2.5,1) -- (3,1);

\foreach \x in {0,1,...,7} {
    \foreach \y in {0,1,...,7} {
        \fill[color=black] (4 + 0.25 * \x, 0.25 * \y) circle (0.04);
    }
}

\draw[thick,->] (6.5,1) -- (7,1);

\foreach \x in {0,1,...,15} {
    \foreach \y in {0,1,...,15} {
        \fill[color=black] (8 + 0.125 * \x, 0.125 * \y) circle (0.04);
    }
}

\draw[thick,-] (10.25,1) -- (10.50,1);
\foreach \x in {1, 2, 3} {
    \fill[color=black] (10.50 + \x * .75/4, 1) circle (0.02);
}
\draw[thick,->] (11.25,1) -- (11.50,1);

\filldraw[fill=black, draw=black] (12,0) rectangle (14,2);

\end{tikzpicture}
\caption{A sequence of points filling in the unit square via increasingly fine grids.}
\label{figure:grid}
\end{figure}
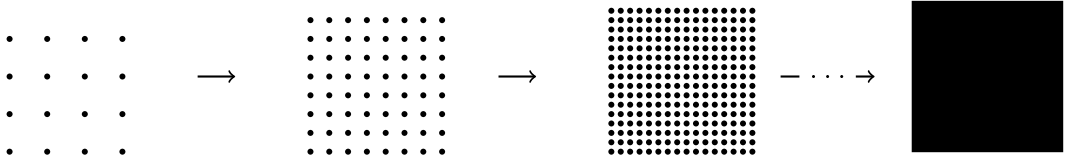

In the next theorem, we show that, in the most general case, very little can be said about the energies of a sequence of points.

\begin{theorem}
\label{theorem:any_sequence_of_energies}
Let $s \in (0, d]$. Let $\{e_k\}$ be a sequence of positive real numbers. Then there is a sequence of points, $\{x_n\} \subset \mathbb{R}^d$ which generates $\{P_n\}$ such that there is an increasing subsequence $\{n_k\}$ with $J_s(P_{n_k}) = e_k$.
\end{theorem}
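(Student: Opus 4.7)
The plan is to construct $\{x_n\}$ and the subsequence $\{n_k\}$ inductively, exploiting two complementary mechanisms: placing a new point very far from everything existing drives $J_s$ down through dilution, while placing a new point very close to an existing one drives $J_s$ up without bound. Continuity of $J_s$ in the position of the last added point, together with the intermediate value theorem, will let me pin $J_s(P_{n_k})$ exactly to $e_k$ at each stage.

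For the base case I would set $n_1 = 2$ and pick $x_1, x_2 \in \mathbb{R}^d$ with $|x_1 - x_2| = e_1^{-1/s}$, so that $J_s(P_2) = e_1$. For the inductive step, suppose $x_1, \ldots, x_{n_k}$ have been chosen with $J_s(P_{n_k}) = e_k$, and write $T_m = \tfrac{1}{2}\sum_{x \neq y \in P_m} |x-y|^{-s}$, so that $J_s(P_m) = 2T_m/(m(m-1))$. I would first append $M$ additional points, each placed at such a large distance from all previously chosen points and from each other that their total contribution to $T$ is at most a prescribed $\varepsilon > 0$. Writing $n_{k+1} = n_k + M + 1$, this yields
\begin{equation*}
J_s(P_{n_{k+1}-1}) \;\leq\; \frac{n_k(n_k-1)\, e_k + 2\varepsilon}{(n_{k+1}-1)(n_{k+1}-2)},
\end{equation*}
which can be made arbitrarily small by choosing $M$ large. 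In particular, I can guarantee $\tfrac{n_{k+1}-2}{n_{k+1}}\, J_s(P_{n_{k+1}-1}) < e_{k+1}$.

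I would then place the final point $x_{n_{k+1}}$ along a ray emanating from $x_1$ in a fixed direction chosen to miss the finitely many other existing points, parametrized by $r = |x_{n_{k+1}} - x_1| > 0$. The resulting map $r \mapsto J_s(P_{n_{k+1}})$ is continuous on $(0, \infty)$: as $r \to 0^+$, the term $|x_{n_{k+1}} - x_1|^{-s}$ blows up, so $J_s(P_{n_{k+1}}) \to \infty$; as $r \to \infty$, every distance $|x_{n_{k+1}} - x_i|$ tends to infinity, so $J_s(P_{n_{k+1}}) \to \tfrac{n_{k+1}-2}{n_{k+1}} J_s(P_{n_{k+1}-1})$, which lies strictly below $e_{k+1}$ by construction. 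The intermediate value theorem then yields an $r > 0$ with $J_s(P_{n_{k+1}}) = e_{k+1}$, completing the inductive step.

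The only delicate point is quantitative: I must verify that ``sufficiently far'' can be made precise enough that the cumulative contribution of the $M$ dilution points to $T$ is uniformly controlled by $\varepsilon$, and that the limiting behavior of $r \mapsto J_s(P_{n_{k+1}})$ as $r \to \infty$ is not corrupted by the other fixed distances. Both are routine once the directions are fixed, since each newly added point contributes only finitely many terms of the form $|\,\cdot\,|^{-s}$, each of which can individually be made as small as we wish by pushing the point farther out.
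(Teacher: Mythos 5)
Your proposal is correct, and it rests on the same two mechanisms as the paper's proof: placing points far away to dilute $J_s$ through the combinatorial factor, and sliding one point toward an existing point so that continuity plus blow-up (the intermediate value theorem) pins the energy exactly to $e_{k+1}$. Where you differ is in the organization of the inductive step. The paper splits into three cases according to whether $e_{k+1} \geq e_k$, whether $e_{k+1}$ lies in the window $\bigl(\tfrac{n_k-1}{n_k+1}e_k,\, e_k\bigr)$ reachable with a single added point, or whether $e_{k+1}$ is smaller still; in that last case it adds a string of far-away points and tries to calibrate a multiplicative factor $(1+\varepsilon)$ on each placement so that the energy lands exactly on $e_{k+1}$ at some intermediate index, which is the most delicate (and least explicit) part of the paper's argument. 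You instead dilute first with $M$ uncontrolled far-away points until $\tfrac{n_{k+1}-2}{n_{k+1}}J_s(P_{n_{k+1}-1}) < e_{k+1}$, and only then make the single exact adjustment with the final point via the intermediate value theorem. This uniform ``dilute, then one IVT correction'' structure removes the case analysis entirely and replaces the paper's $\varepsilon$-tuning across several placements with one clean continuity argument, so your version is, if anything, tighter than the original; the quantitative caveat you flag at the end is indeed routine, since each new point contributes only finitely many terms $|\cdot|^{-s}$, each made arbitrarily small by pushing the point out.
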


\begin{proof}
We proceed via an inductive construction. Set $x_1$ as the origin and then $x_2$ such that $|x_1 - x_2|^{-s} = e_1$. This gives us $n_1 = 2$.

Before proceeding with the construction, note that if we add a point $x_{n_k + 1}$ to $P_{n_k}$ we can put $x_{n_k + 1}$ far away enough form $P_{n_k}$ so that for any $\varepsilon > 0$,

$$\frac{(n_k)(n_k - 1)}{(n_k + 1)(n_k)} J_s(P_{n_k}) = \frac{n_k - 1}{n_k + 1}J_s(P_{n_k}) < J_s(P_{n_k + 1}) < (1 + \varepsilon)\frac{n_k - 1}{n_k + 1}J_s(P_{n_k}).$$

Now, if $e_{k + 1} \geq e_{k}$ then we can pick a point, $y \in\mathbb{R}^d$ such that $J_s(P_{n_k} \cup \{y\}) < J_s(P_{n_k})$ and such that the line segment containing $y$ to $x_1$ does not pass through any other points in $P_{n_k}$. As we move $y$ closer to $x_1$, $J_s(P_{n_k} \cup \{y\}) \to \infty$. Thus, somewhere along that line segment, $J_s(P_{n_k} \cup \{y\}) = e_{k + 1}$ and we have $n_{k + 1} = n_{k} + 1$.

On the other hand, if

$$\frac{n_k - 1}{n_k + 1} e_k <  e_{k + 1} < e_{k}$$

one can place $x_{n_k + 1}$ far enough away from $P_{n_k}$ so that $J_s(P_{n_k + 1}) < e_{k + 1}$ and then follow a similar procedure to above to make $J_s(P_{n_k + 1}) = e_{k + 1}$. Finally, if

$$e_{k + 1} < \frac{n_k - 1}{n_k + 1} e_k$$

for some very small $\varepsilon > 0$, one can place $x_{n_k + 1}$ far enough away such that

$$J_s(P_{n_k + 1}) = (1 + \varepsilon)\frac{n_k - 1}{n_k + 1} e_{k}.$$

Repeating this procedure, one gets

$$J_s(P_{n_k + \ell}) = (1 + \varepsilon)^{\ell} \frac{(n_k - 1)(n_k)}{(n_k + \ell - 1)(n_k + \ell)} e_k.$$

There is a minimum $\ell > 0$ such that

$$\frac{(n_k - 1)(n_k)}{(n_k + \ell - 1)(n_k + \ell)} e_k < e_{k + 1}.$$

By picking $\varepsilon$ appropriately, one gets

$$J_s(P_{n_k + \ell}) = e_{k +1}$$

making $n_{k + 1} = n_k + \ell$.
\end{proof}

In the proof of theorem \ref{theorem:any_sequence_of_energies}, one might notice that the points can become arbitrarily far away and, in general, have little relation to each other or any other ambient set. As such, theorem \ref{theorem:any_sequence_of_energies} it is not necessarily surprising.  In order to have the $\{J_s(P_n)\}$ mean something we need to add some structure.

\subsection{Normalized counting measures with weak star convergence}
\label{subsection:normalized_counting_measure_convergence}

The first type of structure we add is perhaps the most natural i.e. we use the normalized counting measure and require that these measures converge.

\begin{definition}
\label{definition:weak_star_convergence} (see e.g. \cite{Borodachov2019})
A sequence of measures, $\{\mu_n\}$ on a set $A$ is said to \textbf{weak-$\star$ converge} to a measure $\mu$ on $A$, written

$$\mu_n \overset{\star}{\longrightarrow} \mu$$

if for any bounded continuous function, $f:A \to \mathbb{R}$,

$$\int_A f(x)d\mu_n(x) \to \int_{A}f(x)d\mu(x).$$
\end{definition}

It is known, (see e.g. Lemma 1.6.6 in \cite{Borodachov2019}), that if $\mu_n \overset{\star}{\longrightarrow} \mu$ then we get the same for the product measures i.e. $\mu_n \times \mu_n \overset{\star}{\longrightarrow} \mu \times \mu.$

\begin{definition}
\label{definition:normalized_counting_measure}
Given a sequence of points $\{x_n\}$ that generate $\{P_n\}$, the \textbf{induced sequence of normalized counting measures} is $\{\nu_n\}$ where

\begin{equation}
\label{equation:induced_normalized_counting_measures}
\nu_n = \frac{1}{n}\sum_{a \in P_n} \delta_a
\end{equation}

where $\delta_a$ is the dirac delta measure at $a$.
\end{definition}

\begin{theorem}
\label{theorem:normalized_counting_weak_star_liminf}
Suppose that $\nu_n$ converges weakly to a Borel probability measure $\mu$ on $\mathbb R^d$. Then

$$I_s(\mu) \leq \liminf_{n\to\infty} J_s(P_n),$$

where both sides are allowed to take the value $+\infty$. In particular, if $I_s(\mu)=+\infty$, then
$$J_s(P_n) \longrightarrow +\infty.$$
\end{theorem}

\begin{proof}
For $M>0$, define

$$K_M(x,y)=\min\{M,|x-y|^{-s}\}.$$

The function $K_M$ is bounded and continuous on $\mathbb R^d\times\mathbb R^d$, where its value on the diagonal is $M$. Since weak convergence is preserved under products,

$$\iint K_M(x,y)\,d\nu_n(x)d\nu_n(y) \longrightarrow \iint K_M(x,y)\,d\mu(x)d\mu(y).$$

On the other hand,

\begin{align*}
    \iint K_M(x,y) d\nu_n(x) d\nu_n(y)
    & = \frac{1}{n^2}\sum_{i = 1}^{n} K_M(x_i, x_i) + \frac{1}{n^2} \sum_{\substack{1\leq i,j\leq n\\i\neq j}} K_M(x_i,x_j) \\
    & \leq \frac{M}{n} + \frac{n-1}{n}J_s(P_n).
\end{align*}
It follows that

$$\iint K_M(x,y)\,d\mu(x)d\mu(y) \leq \liminf_{n\to\infty}J_s(P_n).$$

Letting $M\to\infty$ and using monotone convergence gives

$$I_s(\mu) \leq \liminf_{n\to\infty}J_s(P_n).$$

If $I_s(\mu) = +\infty$, the right-hand side is infinite. Equivalently, for every $A > 0$, all sufficiently large $n$ satisfy $J_s(P_n) > A$.
\end{proof}

\begin{theorem}
\label{theorem:normalized_counting_weak_star_unif_int}
Suppose that $\nu_n$ converges weakly to a Borel probability measure $\mu$ and satisfies the uniform integrability condition

$$\lim_{M\to\infty} \sup_{n\geq2} \frac1{n(n-1)} \sum_{\substack{1\leq i,j\leq n\\i\neq j}}
\left(|x_i-x_j|^{-s}-M\right)_+ = 0.$$

Then

$$J_s(P_n)\longrightarrow I_s(\mu).$$
    
\end{theorem}

\begin{proof}
For $M>0$, set

$$J_{s,M}(P_n) = \frac{1}{n(n-1)} \sum_{\substack{1\leq i,j\leq n\\i\neq j}} K_M(x_i ,x_j).$$

The identity

$$\iint K_M\,d\nu_n d\nu_n = \frac{M}{n}+\frac{n-1}{n}J_{s,M}(P_n)$$

and weak convergence imply that, for each fixed $M$,

\begin{equation}
    \label{equation:JsM_to_integral_KM}
    J_{s,M}(P_n) \longrightarrow \iint K_M(x,y)\,d\mu(x)d\mu(y).
\end{equation}

Moreover,

$$0\leq J_s(P_n)-J_{s,M}(P_n) = \frac1{n(n-1)} \sum_{i\neq j} \left(|x_i-x_j|^{-s}-M\right)_+.$$

By the uniform integrability assumption, for any $\varepsilon > 0$, for big enough $M$, we get that for all $n$,

$$J_s(P_n) - J_{s, M}(P_n) \leq \varepsilon.$$

As such, for any $\varepsilon > 0$, for sufficiently large $M$ we have that, for all $n$

$$J_s(P_n) \leq J_{s, M}(P_n) + \varepsilon \leq M + \varepsilon \leq \infty.$$

So, by Theorem \ref{theorem:normalized_counting_weak_star_liminf}, $I_s(\mu) < \infty$. For this $M$ and sufficiently large  $n$ we get

$$J_{s}(P_n) \geq J_{s,M}(P_n)\geq \iint K_M(x,y)d\mu(x)d\mu(y) - \varepsilon \geq I_s(\mu) - 2\varepsilon$$

and

$$J_{s}(P_n) \leq J_{s,M}(P_n) + \varepsilon \leq \iint K_M(x,y)d\mu(x)d\mu(y) + 2\varepsilon \leq I_s(\mu) + 3\varepsilon$$

where we used equation (\ref{equation:JsM_to_integral_KM}) and where the last inequality in each comes from the monotone convergence theorem and the finiteness of $I_s(\mu)$. This gets us

$$I_s(\mu) - 2\varepsilon \leq J_s(P_n) \leq I_s(\mu) + 3\varepsilon.$$

Letting $\varepsilon \to 0$ proves the theorem.
\end{proof}

\begin{corollary}
\label{corollary:t_adpatable_for_t_gt_s}
Suppose that $\nu_n$ converges weakly to $\mu$. If there is an exponent $t>s$ such that

$$\sup_{n\geq2}J_t(P_n)<\infty,$$

then

$$J_s(P_n) \overset{n \to \infty}{\longrightarrow}I_s(\mu).$$
\end{corollary}

\begin{proof}
For any $M$, letting $r^{-s} = M$,
\begin{align*}
    \frac{1}{n(n-1)} & \sum_{\substack{1\leq i,j\leq n\\i\neq j}} \left(|x_i-x_j|^{-s}-M\right)_+ \\
    & \leq \frac{1}{n(n-1)} \sum_{0 < |x_i - x_j| < r} |x_i - x_j|^{-s} \\
    & \leq \frac{1}{n(n-1)} r^{t - s}\sum_{0 < |x_i - x_j| < r} |x_i - x_j|^{-t} \\
    & \leq r^{t - s} J_t(P_n).
\end{align*}

This goes to zero as $r \to 0$ (i.e. as $M \to \infty$) which gives us the uniform integrability condition in Theorem \ref{theorem:normalized_counting_weak_star_unif_int}.
\end{proof}

\begin{corollary}
\label{corollary:normalized_counting_s_adaptable_dimension}
Let $\{x_n\} \subset \mathbb{R}^d$ generate $\{P_n\}$ and induce measures $\{\nu_n\}$ as in definition \ref{definition:normalized_counting_measure}. If $\{P_n\}$ is $s$-adaptable and $\nu_n \overset{\star}{\longrightarrow} \mu$, for some $\mu$, then

$$dim_\mathcal{H}(supp(\mu)) \geq s.$$
\end{corollary}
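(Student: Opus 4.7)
The plan is to follow the truncated-kernel argument from the proof of Theorem~\ref{theorem:normalized_counting_weak_star_convergence}, but to replace the two-sided limit $J_t(P_n)\to I_t(\mu)$ with a one-sided upper bound on $I_t(\mu)$. This sidesteps the atomlessness hypothesis needed there, at the cost of only proving finiteness of $I_t(\mu)$ for every $t<s$, which is exactly what the Sobolev-energy criterion for Hausdorff dimension requires.

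Fix $t<s$ and let
$$K_t^R(x,y)=\bigl(1-\phi(R^{-1}|x-y|)\bigr)|x-y|^{-t},$$
with $\phi$ a radial, radially nonincreasing bump function with $\phi(0)=1$ and compact support, as in the proof of Theorem~\ref{theorem:normalized_counting_weak_star_convergence}. Then $K_t^R$ is bounded and continuous on $\mathbb{R}^d\times\mathbb{R}^d$, vanishes on the diagonal, and satisfies $0\le K_t^R(x,y)\le |x-y|^{-t}$ off the diagonal. Writing $I_t^R(\tau)=\int\int K_t^R\,d\tau\,d\tau$, I get the pointwise bound
$$I_t^R(\nu_n)\le \frac{1}{n^2}\sum_{\substack{x,y\in P_n\\x\ne y}}|x-y|^{-t}=\frac{n-1}{n}\,J_t(P_n)\lesssim_t 1,$$
uniformly in both $n$ and $R$, where the last step is $s$-adaptability. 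Since $K_t^R$ is bounded and continuous for each fixed $R$, the weak-$\star$ convergence $\nu_n\times\nu_n\overset{\star}{\longrightarrow}\mu\times\mu$ (noted just before Definition~\ref{definition:normalized_counting_measure}) gives $I_t^R(\mu)=\lim_{n\to\infty}I_t^R(\nu_n)\lesssim_t 1$, with a constant independent of $R$.

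Now I let $R\to 0$. Since $\phi$ is radially nonincreasing, $K_t^R(x,y)\nearrow |x-y|^{-t}$ pointwise off the diagonal, so monotone convergence yields $I_t(\mu)=\lim_{R\to 0}I_t^R(\mu)\lesssim_t 1<\infty$ for every $t<s$. Hence $dim_{\mathcal{S}}(\mu)\ge s$. Uniform boundedness of $\{P_n\}$ (built into Definition~\ref{definition:s_adaptable}) keeps all the $\nu_n$ supported in a common compact set, so no mass escapes and the weak-$\star$ limit $\mu$ is a probability measure supported in $supp(\mu)$. Definition~\ref{definition:hausdorff_dimension} then gives $dim_\mathcal{H}(supp(\mu))\ge\min\{dim_{\mathcal{S}}(\mu),d\}\ge\min\{s,d\}=s$, the regime $s>d$ being vacuous since Hausdorff dimension never exceeds $d$.

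The main subtlety is that Theorem~\ref{theorem:normalized_counting_weak_star_convergence} cannot be invoked directly because $\mu$ might have atoms; the truncated kernels avoid this because the uniform bound $I_t^R(\nu_n)\lesssim_t 1$ only needs to be transferred in one direction, and the diagonal contribution in $I_t^R$ is killed by $1-\phi$ rather than by any regularity of $\mu$.
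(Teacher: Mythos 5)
Your one-sided truncation idea is a reasonable refinement of the paper's route (the paper simply applies Theorem \ref{theorem:normalized_counting_weak_star_convergence} to get $I_t(\mu)<\infty$ for every $t<s$ and then cites the energy criterion), and everything up to and including the uniform bound $I_t^R(\mu)\lesssim_t 1$ is fine. The gap is at the final monotone convergence step. Since $K_t^R$ vanishes on the diagonal for every $R$, its pointwise limit as $R\to 0$ is $|x-y|^{-t}$ off the diagonal but $0$ on it, so monotone convergence only gives $\lim_{R\to 0}I_t^R(\mu)=\int\int_{\{x\neq y\}}|x-y|^{-t}\,d\mu(x)\,d\mu(y)$. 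This equals $I_t(\mu)$ as defined in (\ref{equation:riesz_energy}) (where the diagonal contributes) only when the diagonal is $(\mu\times\mu)$-null, i.e.\ only when $\mu$ is atomless --- precisely the hypothesis you claimed to sidestep. The truncated kernels are blind to atoms, so no uniform bound on $I_t^R$ can detect them, and finiteness of the off-diagonal energy alone does not bound $dim_{\mathcal{H}}(supp(\mu))$: a single point mass has zero off-diagonal energy and zero-dimensional support. So, as written, the argument does not close in the very case it was designed to cover.

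The fix is short and comes from the discrete hypothesis rather than the kernels: if $\mu(\{x_0\})=c>0$, then testing weak-$\star$ convergence against a continuous function equal to $1$ on $B(x_0,\epsilon/2)$ and supported in $B(x_0,\epsilon)$ shows that for all large $n$ at least $cn/2$ points of $P_n$ lie in $B(x_0,\epsilon)$; these pairs alone give $J_t(P_n)\geq \frac{(cn/2)(cn/2-1)}{n(n-1)}(2\epsilon)^{-t}\gtrsim c^{2}(2\epsilon)^{-t}$ for any fixed $t\in(0,s)$, which contradicts $s$-adaptability once $\epsilon$ is small. With atomlessness established, your monotone convergence identification is valid (or one can simply invoke Theorem \ref{theorem:normalized_counting_weak_star_convergence}, as the paper's own proof does), and the remainder of your argument --- the uniform-in-$R$ transfer via $\nu_n\times\nu_n\overset{\star}{\longrightarrow}\mu\times\mu$, the passage to all $t<s$, and the step $\min\{s,d\}=s$ --- is correct. (A minor point shared with the paper: for $K_t^R$ to be bounded and continuous you should take $\phi\equiv 1$ on a neighborhood of the origin, not merely $\phi(0)=1$.)
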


\begin{proof}
For any $s' < s$, we take $t \in (s', s)$ and we know that

$$\sup_{n \geq 2} J_t(P_n) < \infty.$$

So, by corollary \ref{corollary:t_adpatable_for_t_gt_s}, $I_{s'}(\mu) < \infty$. Thus, by Theorem 2.8 in \cite{Mattila2015}, we know that $dim_\mathcal{H}(supp(\mu)) \geq s'$.  
\end{proof}

It is possible to find a sequence of points such that the energies converge but the induced normalized counting measure don't converge to any given measure, though constructing a counter-example is non-trivial. We give one here.

\begin{example}
\label{example: energy_converge_measure_doesnt} The key behind this example is to place points such that they approximate a sequence of measures that gradually change but never converge. Here, we consider the unit circle, $C \subset \mathbb{R}^2$, parametrized by the angle in the standard way, and measures $\{\gamma_n\} \subset \mathcal{M}(C)$ such that

$$\gamma_n = \alpha + \beta_{H_n}$$

where $\alpha$ is half of the uniform probability measure on the $C$, $H_n = \sum_{k = 1}^{n} \frac{1}{k}$, and $\beta_t$ is the uniform probability measure on the semi-circle $(t - \pi/2, t+\pi/2)$. We can see that, since the $\gamma$'s are just rotations of each other and since $I_s$ is invariant under rigid transformations, we get that

$$\forall m, n \in \mathbb{N}, \forall s \in [0, 1), I_s(\gamma_m) = I_s(\gamma_n) < \infty.$$

Let $\{x_k\}$ fill in $supp(\alpha) = C$ in a uniform manner and $\{y_k\}$ fill in $supp(\beta_0)$ in a uniform manner. We will place the points (some rotated) in the following manner. Let $P_k$ be the set of the first $k$ points placed.

Phase $0$: For some large $N$, place $\{x_1, ..., x_N\}$ and $\{y_1, ..., y_N\}$, alternating between the $x$'s and $y$'s, resulting in a close approximation of $\gamma_0$.

Phase $n + 1$: Assume that in phases $1$ to $n$, a total of $2 k_n$ were used and approximate $\gamma_n$. Note that there are two only two wedges whose measure changes between $\gamma_n$ and $\gamma_{n + 1}$: $A_1$ whose probability decreases and $A_2$ whose probability increases (see Figure \ref{fig:circle_measures}). Starting with $x_{k_n + 1}$ and $y_{k_n + 1}$, place the points in $\{x_k\}$ and $\{y_k\}$, alternating between the two in the same way we did in phase $n$ except that any $x$'s or $y$'s that would have gone to $A_1$ now get rotated to go to the equivalent place in $A_2$. We do this as long as $\nu_k(A_2)$ is getting closer to $\gamma_{n + 1}(A_2)$ and then stop. (This will happen eventually since $A_2$ is getting $x$'s at twice the rate as any other part of the circle and $y$'s at a rate no lower than any other part of the circle). At the end of this phase, the $2 k_{n + 1}$ points are now a better approximation of $\gamma_{n + 1}$ then the $2 k_n$ points were of $\gamma_n$ and, as such,

$$|J_s(P_{2k_{n}}) - I_s(\gamma_{n})|$$

is decreasing as a function of $n$. As we proceed through phase $n + 1$, the discrete $s$-energy at any point is

\begin{align*}
    J_s(P_{k}) = \frac{1}{2k(2k - 1)} \left[ \sum_{\substack{x, y \in \widetilde{B}_1 \cup \widetilde{B}_2 \\x \neq y}} |x - y|^{-s} + 2 \left( \sum_{\substack{x \notin \widetilde{B}_1 \cup \widetilde{B}_2 \text{ or } y \notin \widetilde{B}_1 \cup \widetilde{B}_2 \\x \neq y}} |x - y|^{-s} \right) \right]
\end{align*}

where $\widetilde{B}_i = B_i \cap P_{k_n}$ (for $i = 1, 2)$. Since the proportion of pairs of summands in $\widetilde{B}_1 \cup \widetilde{B}_2$ is $(\frac{k - 2}{k})^2 \to 1$, as $n$ increases, the second term above goes to zero and so we get

$$J_s(P_k) \overset{k \to \infty}{\longrightarrow} I_s(\gamma_0).$$

Finally, we note that since $\gamma_n = \alpha+\beta_{H_n}$ and $H_n \to \infty$, the measures never converge since the midpoint of the semi-circle with bigger measure never converges.

\begin{figure}
\centering

\begin{tikzpicture}
\draw (-3,0) circle (2.0cm);
\draw[line width=3pt] (-3,0) ++(-90:2cm) arc[start angle=-90, end angle=90, radius=2.0cm];
\node at (-2.5,-1.5) {\large $A_1$};
\node at (-3.5,1.5) {\large $A_2$};
\node at (-1.5,0.5) {\large $B_1$};
\node at (-4.5,-0.5) {\large $B_2$};

\draw[->, ultra thick] (-0.75,0) -- (0.75,0);

\draw (3,0) circle (2cm);
\draw[line width=3pt] (3,0) ++(-60:2cm) arc[start angle=-60, end angle=120, radius=2cm];
\node at (3.5,-1.5) {\large $A_1$};
\node at (2.5,1.5) {\large $A_2$};
\node at (4.5,0.5) {\large $B_1$};
\node at (1.5,-0.5) {\large $B_2$};

\end{tikzpicture}

\caption{Two measures, $\gamma_{n - 1}$ and $\gamma_{n}$ on the circle. The thick parts indicate the support of $\beta_{n - 1}$ and $\beta_n$. As we go from $\gamma_{n}$ to $\gamma_{n + 1}$, $A_1$ decreases in measure, $A_2$ correspondingly increases in measure, and the measures of the $B_i$ remain unchanged.}
\label{fig:circle_measures}
\end{figure}
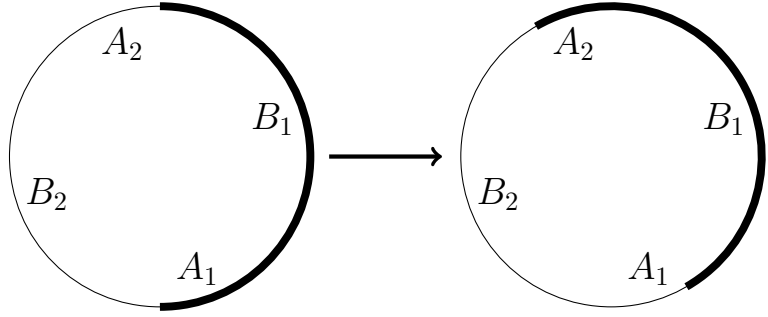

\end{example}

\subsection{Measures induced by balls}
\label{subsection:measure_generated_by_balls}

In sub-section \ref{subsection:normalized_counting_measure_convergence} we looked at the normalized counting measure generated by a finite set of points. Here, we look another way to generate probability measures given a sequence of points - i.e. by putting balls around each point with the radius of the balls decreasing as we add more points.

\begin{definition}
\label{definition:ball_measures}
Given a sequence of points $\{x_n\}$ that generate $\{P_n\}$ and some $s, c > 0$, the \textbf{induced sequence of ball measures} is $\{\kappa_{n, c}\}$ where

\begin{equation}
\label{equation:ball_measures}
d\kappa_{n, c} = n^{-1} \omega_d^{-1} c^{-d} n^{d/s} \sum_{a \in P_n} \mathds{1}_{B(a, cn^{-1/s})}(x) dx
\end{equation}

where $\omega_d$ is the volume of the unit $d$-ball.

\end{definition}

This measure is used in e.g. \cite{Iosevich2014} and \cite{Iosevich2019}.  We here show that $I_s(\kappa_{n,c})$ is also related to $J_s(P_n)$. The following theorem is a tighter version of Theorem 2.7 in \cite{Iosevich2014}.

\begin{theorem}
\label{theorem:J_s_vs_I_s_for_ball_measures} For a set $P_n \subset \mathbb{R}$ with $n$ elements where, for $a \neq b \in P_n$, $|a - b| > 2cn^{-1/s + \varepsilon}$ for some $\varepsilon > 0$, $n > 2^{s + 1}$, and $\kappa_{n, c}$ defined as in equation (\ref{equation:ball_measures}),

$$I_s(\kappa_{n,c}) = \left( 1 + O(n^{-\varepsilon}) \right) \frac{n - 1}{n} J_s(P_n) + \frac{\sigma_d 2^{d - s}}{\omega_dc^s(d - s)}$$

where $\sigma_d$ is the surface area of the unit $d$-ball.
\end{theorem}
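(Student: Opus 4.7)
The plan is to split $I_s(\kappa_{n,c})$ according to whether the two integration variables $x,y$ lie in the same ball $B(a,r)$ or in two distinct balls, where $r := cn^{-1/s}$. Let $h := (n\omega_d r^d)^{-1} = n^{-1}\omega_d^{-1}c^{-d}n^{d/s}$ denote the constant density of $\kappa_{n,c}$ on its support, so $h\omega_d r^d = 1/n$. Expanding the product measure gives
\begin{equation*}
I_s(\kappa_{n,c}) = h^2 \sum_{a \in P_n} \int_{B(a,r)^2} |x-y|^{-s}\,dx\,dy + h^2 \sum_{\substack{a,b\in P_n \\ a\neq b}} \int_{B(a,r)}\int_{B(b,r)} |x-y|^{-s}\,dx\,dy.
\end{equation*}
The strategy is to show that the off-diagonal sum reproduces $\bigl(1+O(n^{-\varepsilon})\bigr)\frac{n-1}{n}J_s(P_n)$ while the diagonal sum contributes the $n$-independent reservoir constant $\sigma_d 2^{d-s}/(\omega_d c^s(d-s))$.

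\textbf{Off-diagonal estimate.} For $a \neq b$ the separation hypothesis $|a-b| > 2rn^{\varepsilon}$ combined with $x\in B(a,r)$, $y\in B(b,r)$ forces $\bigl||x-y| - |a-b|\bigr| \le 2r < n^{-\varepsilon}|a-b|$. A first-order binomial expansion of $t\mapsto t^{-s}$ around $|a-b|$ then yields $|x-y|^{-s} = |a-b|^{-s}\bigl(1+O_s(n^{-\varepsilon})\bigr)$ uniformly in $x,y$; the hypothesis $n > 2^{s+1}$ is what makes the implied constant absolute (it guarantees $n^{-\varepsilon}$ is small enough for the expansion to hold with an $s$-independent prefactor). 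The inner integrals then collapse to $(\omega_d r^d)^2$, and since $h^2(\omega_d r^d)^2 = 1/n^2$, summation over $a\neq b$ produces
\begin{equation*}
h^2 \sum_{a\neq b}\int_{B(a,r)}\int_{B(b,r)} |x-y|^{-s}\,dx\,dy = \bigl(1+O(n^{-\varepsilon})\bigr)\frac{n-1}{n}J_s(P_n).
\end{equation*}

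\textbf{Diagonal estimate and main obstacle.} By translation invariance each same-ball integral equals $D := \int_{B(0,r)^2}|x-y|^{-s}\,dx\,dy$. For any fixed $x\in B(0,r)$, the inclusion $B(0,r)\subset B(x,2r)$ and a polar change of variables $z = y-x$ give
\begin{equation*}
\int_{B(0,r)}|x-y|^{-s}\,dy \;\le\; \int_{B(x,2r)}|x-y|^{-s}\,dy \;=\; \sigma_d\int_0^{2r}\rho^{d-1-s}\,d\rho \;=\; \frac{\sigma_d (2r)^{d-s}}{d-s},
\end{equation*}
so $D \le \omega_d r^d\cdot\sigma_d(2r)^{d-s}/(d-s)$. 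Multiplying by $nh^2 = (n\omega_d^2 r^{2d})^{-1}$ and substituting $r^{-s} = c^{-s}n$ collapses the diagonal contribution to $\sigma_d 2^{d-s}/(\omega_d c^s(d-s))$, independent of $n$, and adding the two pieces gives the claimed identity. The real obstacle is this diagonal piece: a crude pointwise bound on $|x-y|^{-s}$ inside each ball would produce a contribution growing like $J_s(P_n)$ and destroy the asymptotic; the ball-inclusion trick $B(0,r)\subset B(x,2r)$ is exactly what keeps the self-energy dimensionless in $n$, and this is the sharpening over Theorem~2.7 of \cite{Iosevich2014}.
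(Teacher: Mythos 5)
Your proposal is correct and follows essentially the same route as the paper's own proof: the same diagonal/off-diagonal split, with the same-ball contribution reduced to $\sigma_d 2^{d-s}/(\omega_d c^s(d-s))$ via the enlargement to a ball of radius $2r$ (the paper does this by the change of variables $u=x-y$, which is the identical estimate), and the cross terms handled by expanding $t\mapsto t^{-s}$ about $|a-b|$ using the separation hypothesis to obtain the $\bigl(1+O(n^{-\varepsilon})\bigr)$ factor, where the paper uses the full power series rather than your first-order expansion. The only shared (and inherited) imprecision is that the diagonal computation is an upper bound rather than an exact evaluation, which matches the paper's own treatment.
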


\begin{proof}
For $I_s(\kappa_{n, c})$ we have

$$I_s(\kappa_{n,c}) = n^{-2} \omega_d^{-2} c^{-2d} n^{2d/s} \sum_{a, b \in P_n} \int_{B(a, cn^{-1/s)}}\int_{B(b, cn^{-1/s)}} |x - y|^{-s}dydx.$$

In the summation, there are $n$ terms where $a = b$. For these, we can substitute $u = x - y$ and $v = x$ to compute the sum of these integrals as below.

\begin{align*}
    n^{-2} \omega_d^{-2} c^{-2d} n^{2d/s} \sum_{a \in P_n} & \int_{B(a, cn^{-1/s})}\int_{B(a, cn^{-1/s})} |x - y|^{-s} dy dx \\
    & = n^{-1} \omega_d^{-2} c^{-2d} n^{2d/s} \int_{B(0, cn^{-1/s})}\int_{B(0, cn^{-1/s})} |x - y|^{-s} dy dx \\
    & = n^{-1} \omega_d^{-2} c^{-2d} n^{2d/s}\int_{B(0, cn^{-1/s})}\int_{B(0, 2cn^{-1/s})} |u|^{-s} du dv \\
    & = n^{-1} \omega_d^{-1} c^{-d} n^{d/s} \int_{B(0, 2cn^{-1/s})} |u|^{-s} du \\
    & = n^{-1} \omega_d^{-1} c^{-d} n^{d/s} \int_{0}^{2cn^{-1/s}} r^{d - 1 -s} \sigma_d dr \\
    & = \frac{\sigma_d 2^{d - s}}{\omega_d c^s (d - s)}.
\end{align*}

To bound the $n(n - 1)$ terms where $a \neq b$ we take the power series of $f(t) = (T + t)^{-s}$ which is

\begin{equation}
    \label{equation:power_series}
    f(t) = T^{-s} + \sum_{k = 1}^{\infty} (-1)^k\frac{\prod_{j = 0}^{k - 1}(s-j)}{k!}T^{-s-k}t^k = T^{-s} + \sum_{k = 1}^{\infty}D_kt^k.
\end{equation}

This has radius of convergence $T$. Let

$$\zeta = n^{-2}\omega_d^{-2} c^{-2d} n^{2d/s}$$

and take the terms to get

\begin{align*}
    & \zeta \sum_{\substack{a, b \in P_n \\a \neq b}} \int_{B(a,cn^{-1/s})} \int_{B(b,cn^{-1/s})} |x - y|^{-s} dy dx \\
    & = \zeta \sum_{\substack{a, b \in P_n \\a \neq b}} |a - b|^{-s} \int_{B(a,cn^{-1/s})} \int_{B(b,cn^{-1/s})} 1 + \sum_{k = 1}^{\infty} D_k |a-b|^{-k} ||x-y|-|a-b||^k dxdy \\
    & = \zeta \sum_{\substack{a, b \in P_n \\a \neq b}} |a - b|^{-s} \int_{B(a,cn^{-1/s})} \int_{B(b,cn^{-1/s})} 1 + \sum_{k = 0}^{\infty} D_k \frac{||x-y|-|a-b||^k}{|a-b|^{k}} dxdy \\
    & \leq \zeta \sum_{\substack{a, b \in P_n \\a \neq b}} |a - b|^{-s} \int_{B(a,cn^{-1/s})} \int_{B(b,cn^{-1/s})} 1 + \sum_{k = 0}^{\infty} D_k \frac{\left(2n^{-1/s}\right)^k}{\left(2n^{-1/s + \varepsilon}\right)^k} dxdy \\
    & = \zeta \sum_{\substack{a, b \in P_n \\a \neq b}} |a - b|^{-s} \int_{B(a,cn^{-1/s})} \int_{B(b,cn^{-1/s})} 1 + O(n^{-\varepsilon}) \\
    & = \left( 1 + O(n^{-\varepsilon}) \right) \frac{n - 1}{n} J_s(P_n).
\end{align*}
\end{proof}

With theorem \ref{theorem:J_s_vs_I_s_for_ball_measures} we get a corollary similar to corollary \ref{corollary:normalized_counting_s_adaptable_dimension}.

\begin{corollary}
\label{corollary:ball_measures_s_adaptable_dimension}
Let $\{x_n\} \subset \mathbb{R}^d$ generate $\{P_n\}$ and induce measures $\{\kappa_{n, c}\}$ as in definition \ref{definition:ball_measures}. If $\{P_n\}$ is $s$-adaptable and  $\kappa_{n,c} \overset{\star}{\longrightarrow} \mu$, for some $\mu$, then

$$dim_\mathcal{H}(supp(\mu)) \geq s.$$
\end{corollary}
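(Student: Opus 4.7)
The plan is to emulate the proof of Corollary \ref{corollary:normalized_counting_s_adaptable_dimension}, using Theorem \ref{theorem:J_s_vs_I_s_for_ball_measures} (the analogue for ball measures of Theorem \ref{theorem:normalized_counting_weak_star_convergence}) together with the lower semi-continuity of the Riesz energy under weak-$\star$ convergence. Fix $t \in (0, s)$. Since $\{P_n\}$ is $s$-adaptable, $J_t(P_n) \lesssim_t 1$ uniformly in $n$. The aim is to deduce $I_t(\mu) < \infty$ for every such $t$; the standard energy criterion for Hausdorff dimension (Theorem 2.8 of \cite{Mattila2015}, already invoked in Corollary \ref{corollary:normalized_counting_s_adaptable_dimension}) and letting $t \to s^-$ will then give the claim.

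The core step is to establish a uniform bound $\sup_n I_t(\kappa_{n,c}) < \infty$. I would expand
$$I_t(\kappa_{n,c}) = \zeta \sum_{a, b \in P_n} \int_{B(a, r)}\int_{B(b, r)} |x - y|^{-t}\, dx\, dy, \qquad r = cn^{-1/s},\ \zeta = n^{-2}\omega_d^{-2} r^{-2d},$$
and treat the diagonal $a = b$, the ``far'' off-diagonal pairs with $|a - b| > 4r$, and the ``close'' off-diagonal pairs with $|a - b| \le 4r$ separately. The diagonal computation proceeds exactly as in the proof of Theorem \ref{theorem:J_s_vs_I_s_for_ball_measures} with $t$ in place of $s$ in the radial integrand, yielding a contribution of order $n^{(t-s)/s} \to 0$. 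For far pairs the triangle inequality gives $|x-y| \ge |a-b|/2$, so their total contribution is bounded by a constant multiple of $J_t(P_n) \lesssim_t 1$. For close pairs, direct integration (substituting $u = x - y$) bounds the per-pair contribution by a constant times $n^{-2} r^{-t}$; meanwhile $s$-adaptability forces the number of such pairs to be at most $O_{t,c}(n^{2-t/s})$, since each close pair contributes at least $(4r)^{-t} = (4c)^{-t} n^{t/s}$ to $n(n-1) J_t(P_n) \lesssim_t n^2$. The collective close-pair contribution is therefore $O_{t,c}(1)$ as well.

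With $\sup_n I_t(\kappa_{n,c}) < \infty$ in hand, the lower semi-continuity of $I_t$ under weak-$\star$ convergence of the product measures --- obtained via a monotone truncation $K_t^R \uparrow |x - y|^{-t}$ of the kernel, exactly as in the proof of Theorem \ref{theorem:normalized_counting_weak_star_convergence}, together with Lemma 1.6.6 of \cite{Borodachov2019} --- yields $I_t(\mu) \le \liminf_n I_t(\kappa_{n,c}) < \infty$, and the conclusion follows as described above.

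The main obstacle is that Theorem \ref{theorem:J_s_vs_I_s_for_ball_measures} ties its separation hypothesis to the exponent $s$ and so cannot be invoked as a black box for $t \ne s$; in addition, $s$-adaptability alone does not force $|a - b| \gtrsim n^{-1/s}$. The technical crux is therefore the close-pairs cardinality estimate, which is where $s$-adaptability has to be used in an essential way. Everything else is routine once that bound is in place.
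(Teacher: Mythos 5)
Your proposal is correct and follows the route the paper itself intends: the paper's proof of Corollary \ref{corollary:ball_measures_s_adaptable_dimension} is a single remark that it is ``essentially the same'' as Corollary \ref{corollary:normalized_counting_s_adaptable_dimension}, i.e.\ bound $I_t(\kappa_{n,c})$ for each $t<s$ in terms of the discrete energy, pass to the weak-$\star$ limit, and invoke the energy criterion from \cite{Mattila2015}. What you add, and what the paper leaves implicit, is the correct observation that Theorem \ref{theorem:J_s_vs_I_s_for_ball_measures} cannot be cited as a black box: its separation hypothesis $|a-b|>2cn^{-1/s+\varepsilon}$ is not implied by $s$-adaptability, and its kernel exponent is tied to the radius exponent $s$ rather than $t$. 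Your diagonal/far/close decomposition supplies exactly the missing estimate, and the numerics check out: the diagonal terms contribute $O(n^{t/s-1})$, the far pairs are controlled by $2^t\frac{n-1}{n}J_t(P_n)\lesssim_t 1$, and the close-pair count $O_{t,c}(n^{2-t/s})$ extracted from $n(n-1)J_t(P_n)\lesssim_t n^2$ balances the per-pair bound $O(n^{-2}r^{-t})$ to give an $O_{t,c}(1)$ contribution. One minor caveat, which your write-up shares with the paper's own treatment of Corollary \ref{corollary:normalized_counting_s_adaptable_dimension}: the truncated kernels $K^R_t$ vanish on the diagonal, so letting $R\to 0$ only bounds the off-diagonal energy of $\mu$; to conclude $I_t(\mu)<\infty$ and apply the energy criterion one should also note that $\mu$ can have no atoms, which does follow from the uniform bound on $J_t(P_n)$ (an atom of mass $m$ would force, via the portmanteau theorem, a proportion $m$ of the points of $P_n$ into arbitrarily small balls and hence $J_t(P_n)\gtrsim m^2\rho^{-t}$ for every small $\rho$, a contradiction), so this is a one-line patch rather than a genuine obstruction.
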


The proof is essentially the same as the proof of corollary \ref{corollary:normalized_counting_s_adaptable_dimension}.

\section{Randomly Drawn Points}
\label{section:randomly_drawn_points}

In this section, we investigate the situation where $\mu \in \mathcal{M}(E)$ for some set $E$ and we generate points $\{X_n\}$ where the $X_n$ are drawn IID via $\mu$. Note that, in this case, the $\{ X_n \}$ are random variables and so if they generate $\{P_n\}$ then the $\{ J_s(P_n) \}$ are also random variables. One might expect that, in a probabilistic sense,

$$J_s(P_n) \overset{n \to \infty}{\longrightarrow} I_s(\mu).$$

This is indeed the case in various senses which we explore in this section.

\subsection{Basic statistics}
\label{subsection:basic_statistics}
We first calculate some basic statistics. Recall that, given a probability space, $(\Omega, \mathcal{F},\mu)$, and a random variable $X:\Omega \to \mathbb{R}$, the expected value of $X$ is

$$\mathbb{E}[X] = \mathbb{E}X = \int_\Omega X(\omega) d\mu(\omega)$$

and the variance of $X$ is

$$Var(X) = Var X = \mathbb{E}\left[(X - \mathbb{E}[X])^2\right] = \mathbb{E} \left[X^2\right] - \mathbb{E}[X]^2.$$

\begin{theorem}
\label{theorem:expected_value_discrete_energy}
Let $\left( \mathbb{R}^d, \mathcal{B}, \mu \right)$ be a Borel probability space on $\mathbb{R}^d$ and $\{X_n\}$ be IID random variables drawn via $\mu$, that generate $\{P_n\}$. Then, for all $n \geq 1$,

$$\mathbb{E} \left[J_s(P_n)\right] = I_s(\mu).$$
\end{theorem}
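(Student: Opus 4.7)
The plan is to prove this by a direct application of linearity of expectation combined with Tonelli's theorem. The key observation is that $J_s(P_n)$ is, up to the normalizing factor $\frac{1}{n(n-1)}$, a sum of $n(n-1)$ identically distributed random variables, each of the form $|X_i - X_j|^{-s}$ with $i \neq j$.

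First I would write
\[
\mathbb{E}[J_s(P_n)] = \frac{1}{n(n-1)} \sum_{\substack{1 \leq i, j \leq n \\ i \neq j}} \mathbb{E}\!\left[ |X_i - X_j|^{-s} \right]
\]
using linearity of expectation (applicable since the kernel $|x-y|^{-s}$ is non-negative, so the sum and expectation can be exchanged unconditionally, whether or not $I_s(\mu)$ is finite). Next, for any fixed pair $i \neq j$, the random variables $X_i$ and $X_j$ are independent and each distributed according to $\mu$, so the joint distribution of $(X_i, X_j)$ is the product measure $\mu \times \mu$ on $\mathbb{R}^d \times \mathbb{R}^d$. Tonelli's theorem (again applicable by non-negativity of the integrand) then gives
\[
\mathbb{E}\!\left[ |X_i - X_j|^{-s} \right] = \int_{\mathbb{R}^d} \int_{\mathbb{R}^d} |x - y|^{-s} \, d\mu(x) \, d\mu(y) = I_s(\mu).
\]

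Substituting this back, every one of the $n(n-1)$ summands equals $I_s(\mu)$, so their sum is $n(n-1)\, I_s(\mu)$, and dividing by $n(n-1)$ yields $\mathbb{E}[J_s(P_n)] = I_s(\mu)$ exactly.

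There is no real obstacle here; the only thing to be careful about is that we are not assuming $I_s(\mu) < \infty$, so one should note explicitly that the manipulations are justified by Tonelli rather than Fubini, and that the identity holds in the extended sense when $I_s(\mu) = \infty$ (in which case $\mathbb{E}[J_s(P_n)] = \infty$ as well, since $J_s(P_n) \geq 0$).
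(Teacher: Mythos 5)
Your proposal is correct and follows essentially the same route as the paper: linearity of expectation over the $n(n-1)$ pairs, with each term $\mathbb{E}\left[|X_i - X_j|^{-s}\right]$ equal to $I_s(\mu)$ because $(X_i, X_j)$ has law $\mu \times \mu$. Your explicit appeal to non-negativity/Tonelli to cover the case $I_s(\mu) = \infty$ is a small refinement the paper leaves implicit, but the argument is the same.
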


\begin{proof}
\begin{align*}
    \mathbb{E}\left[ J_s(P_n) \right]
    & = \mathbb{E}\left[ \frac{1}{n(n - 1)}\sum_{\substack{1 \leq j, k \leq n \\ j \neq k}} |X_j - X_k|^{-s} \right] \\
    & = \frac{1}{n(n - 1)}\sum_{\substack{1 \leq j, k \leq n \\ j \neq k}} \mathbb{E}\left[ |X_j - X_k|^{-s} \right] \\
    & = \frac{1}{n(n - 1)} \cdot n(n - 1) \int \int |x - y|^{-s} d\mu(x) d\mu(y) \\
    & = I_s(\mu).
\end{align*}
\end{proof}

\begin{theorem}
\label{theorem:variance_discrete_energy}
Let $\left( \mathbb{R}^d, \mathcal{B}, \mu \right)$ be a Borel probability space on $\mathbb{R}^d$, $\{X_n\}$ be IID random variables drawn via $\mu$, that generate $\{P_n\}$. Then $Var(J_s(P_n)) = \infty$ iff $I_{2s}(\mu) < \infty$.
\end{theorem}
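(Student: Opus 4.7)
The plan is to treat the statement as the equivalence $Var(J_s(P_n))<\infty \iff I_{2s}(\mu)<\infty$ (the ``$=\infty$'' in the statement appears to be a typo, since ``$Var=\infty$ iff $I_{2s}<\infty$'' is backwards) and prove it by expanding the second moment $\mathbb{E}[J_s(P_n)^2]$ and combining with Theorem \ref{theorem:expected_value_discrete_energy} through $Var(J_s(P_n)) = \mathbb{E}[J_s(P_n)^2] - I_s(\mu)^2$. Squaring the defining sum produces a double sum over ordered pairs $(j,k)$ and $(l,m)$ with $j\neq k$ and $l\neq m$; I would partition the summands by the size of the overlap $|\{j,k\}\cap\{l,m\}|\in\{0,1,2\}$ so that, by the IID hypothesis, the expectation of each summand depends only on the overlap pattern and not on the specific indices.

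This produces three species of terms. Disjoint pairs factor by independence and each contributes $I_s(\mu)^2$. Pairs sharing exactly one index contribute, after integrating out the two unshared copies of $\mu$, integrals of the form $\int V_s(x)^2\,d\mu(x)$, where $V_s$ is the Riesz potential from equation (\ref{equation:riesz_kernel}). Coinciding pairs $\{j,k\}=\{l,m\}$ contribute $\mathbb{E}[|X_j-X_k|^{-2s}]=I_{2s}(\mu)$. A short combinatorial count shows that all three coefficients are strictly positive and, in particular, that the coinciding-pair contribution is $\tfrac{2}{n(n-1)}I_{2s}(\mu)$ for $n\geq 2$.

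The forward implication ($I_{2s}(\mu)=\infty\Rightarrow Var(J_s(P_n))=\infty$) is then immediate from positivity: the coinciding-pair term alone forces $\mathbb{E}[J_s(P_n)^2]=\infty$. For the reverse direction, assuming $I_{2s}(\mu)<\infty$, I would bound each of the three pieces by Cauchy--Schwarz. On $\mu\times\mu$, Cauchy--Schwarz gives $I_s(\mu)\leq I_{2s}(\mu)^{1/2}$, so the disjoint-pair contribution $I_s(\mu)^2\leq I_{2s}(\mu)$ is finite. For the single-overlap contribution, Cauchy--Schwarz in the integral defining $V_s$ yields $V_s(x)^2\leq V_{2s}(x)$ pointwise in $x$, and integrating against $\mu$ gives $\int V_s(x)^2\,d\mu(x)\leq I_{2s}(\mu)$. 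The coinciding contribution is $I_{2s}(\mu)$ itself. All three pieces are finite, so $\mathbb{E}[J_s(P_n)^2]<\infty$ and hence $Var(J_s(P_n))<\infty$.

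The main obstacle is cosmetic rather than analytic: the bookkeeping of the overlap decomposition and its coefficients must be written out carefully, and some attention is needed to justify applying Fubini to the nonnegative integrands so that the identification of the conditional expectations with $I_s(\mu)^2$, $\int V_s^2\,d\mu$, and $I_{2s}(\mu)$ is valid even when these quantities are infinite. Once this is set up, all the required bounds reduce to a single application of Cauchy--Schwarz.
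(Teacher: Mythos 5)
Your proposal is correct and follows essentially the same route as the paper: expand $\mathbb{E}[J_s(P_n)^2]$, split the double sum by the overlap $|\{j,k\}\cap\{l,m\}|\in\{0,1,2\}$, identify the contributions $I_s(\mu)^2$, $\int V_s^2\,d\mu$, and $I_{2s}(\mu)$, and control the single-overlap and disjoint terms by Cauchy--Schwarz against $I_{2s}(\mu)$. Your reading of the statement (that the intended claim is $Var(J_s(P_n))<\infty$ iff $I_{2s}(\mu)<\infty$) is also exactly what the paper's own proof establishes.
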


\begin{proof}
$$Var \left[J_s(P_n)\right] = \mathbb{E}\left[ J_s(P_n)^2 \right] - \mathbb{E}\left[ J_s(P_n) \right]^2.$$

From theorem \ref{theorem:expected_value_discrete_energy}, the second term is $I_s(\mu)^2$. For the first term, we get

$$\mathbb{E}\left[ J_s(P_n)^2 \right] = \sum_{\substack{1 \leq i, j  \leq n \\i \neq j}} \sum_{\substack{1 \leq i',  j'  \leq n\\i' \neq j'}} \mathbb{E}\left[|X_i - X_j|^{-s} |X_{i'} - Y_{j'}|^{-s} \right].$$

This has three types of terms. \begin{itemize}
    \item [i)] Terms where $|\{i, j\} \cap \{i', j'\}| = 2$.
    \item [ii)] Terms where $|\{i, j\} \cap \{i', j'\}| = 1$.
    \item [iii)] Terms where $|\{i, j\} \cap \{i', j'\}| = 0$.
\end{itemize}

For type i) we can compute that

$$\mathbb{E}\left[|X_i - X_j|^{-s} |X_i - X_j|^{-s} \right] = \mathbb{E}\left[|X_i - X_j|^{-2s} \right] = I_{2s}(\mu).$$

For type iii), since all of the points are IID, we can compute that

$$\mathbb{E}\left[|X_i - X_j|^{-s} |X_{i'} - X_{j'}|^{-s} \right] = \mathbb{E}\left[|X_i - X_j|^{-s}\right] \cdot \mathbb{E}\left[|X_{i'} - X_{j'}|^{-s}\right] = \mathbb{E}\left[|X_i - X_j|^{-s} \right]^2 = \left(I_s(\mu)\right)^2.$$

For type ii) we can compute, using Cauchy-Schwarz, that

\begin{align*}
    \mathbb{E}\left[|X_i - X_j|^{-s} |X_i - X_{j'}|^{-s} \right]
    & = \int \int \int |x - y|^{-s} |x - y'|^{-s} d\mu(y') d\mu(y) d\mu(x) \\
    & = \int \left( \int |x - y|^{-s} d\mu(y) \right)^2 d\mu(x) \\
    & \leq \int  \left( \int d\mu(y) \right) \left( \int |x - y|^{-2s} d\mu(y) \right) d\mu(x) \\
    & = \int V_{2s}(x; \mu) d\mu(x) \\
    & = I_{2s}(\mu).
\end{align*}

Since $I_{2s}(\mu) < \infty \Rightarrow I_s(\mu) < \infty$, we get that

$$Var(J_s(P_n)) <\infty
\Leftrightarrow \mathbb{E}\left[ J_s(P_n)^2 \right]
\Leftrightarrow I_{2s}(\mu) < \infty.$$
\end{proof}

\subsection{Laws of large numbers}
\label{subsection:law_of_large_numbers}
Given that $\mathbb{E}[J_s(P_n)] = I_s(\mu)$ above, we might expect that, in some sense.

$$J_s(P_n) \overset{n \to \infty}{\longrightarrow} I_s(\mu)$$

This is indeed the case as can be seen using the theory of $U$-statistics.

\begin{theorem}
\label{theorem:law_large_numbers}
Let $\{X_n\}$ be a collection of IID random variables where $X_n \sim \mu$ for some Borel probability measure $\mu$ on $\mathbb{R}^d$. Let $\{P_n\}$ be the random sets

$$P_n = \{X_1, ..., Xn\}$$

and define the random variables

$$U_n = J_s(P_n).$$

Then, \begin{itemize}
    \item [(1)] if $I_s(\mu) < \infty$, $U_n \to I_s(\mu)$ almost surely, and
    \item [(2)] if $I_s(\mu) = \infty$ then $U_n \to \infty$ almost surely.
\end{itemize}
\end{theorem}

\begin{proof}
Since

$$U_n = {{n}\choose{2}}^{-1} \sum_{1 \leq i < j \leq n} |x_i - x_j|^{-s},$$

$U_n$ is a $U$-statistic of the kernel $h(x,  y) = |x - y|^{-s}$.  By Theorem 5.4A of \cite{Serfling1981-je}, if $I_s(\mu) < \infty$ we get that

$$U_n \overset{n \to \infty}{\longrightarrow} J_s(P_n)$$

almost surely. If $I_s(\mu) = \infty$ then note that for $M > 0$, using $K_M$ as the kernel we get

$${{n}\choose{2}}^{-1} \sum_{1 \leq i < j \leq n} K_M(x_i, x_j) \overset{n \to \infty}{\longrightarrow} \iint K_M(x, y) d\mu(x)d\mu(y)$$

by the same theorem. With probability $1$, the above is true for all $M \in \mathbb{N}$. Since $U_n$ dominates all of the truncated statistics, we can use the monotone convergence theorem with $M \to \infty$ to get that $U_n \to \infty$ almost surely.
\end{proof}

\section{Connections to Erd\H{o}s/Falconer type problems}
\label{section:connections_to_other_problems}

In \cite{Iosevich2014}, \cite{Iosevich2019}, and other papers, the discrete energy is used to make connections between discrete and continuous problems. Given the theorems relating the discrete energy to the Riesz energy, here, we can extend these results to new situations.

\subsection{Connecting the Falconer and Erd\H{o}s Distance Problems}
\label{subsection:falconer_erdos_connections}

We briefly describe the Erd\H{o}s and Falconer distance problems here, but a more extensive discussion can be found in \cite{Iosevich2014}. Let $P \subset \mathbb{R}^d$ be a finite set and its corresponding distance set be

$$\Delta(P) = \Delta P = \{|x - y| : x, y \in P\}.$$

The Erd\H{o}s distance problem (\cite{Erdos1946-cb}) asks for a minimum size for $\Delta P$ in terms of $\# P$. It is conjectured that

$$\#(\Delta P) \gtrapprox (\#P)^{\frac{2}{d}}.$$

where $X \gtrapprox Y$ means that for all $\varepsilon > 0$, there is a $C_\varepsilon \in \mathbb{R}$ such that $C_{\varepsilon} X^{\varepsilon} \geq Y$.

The Falconer distance problem (\cite{Falconer1985-kj}) is the continuous version of the Erd\H{o}s distance problem. It starts with an arbitrary set, $P \subset \mathbb{R}^d$ and asks what the minimum value for $dim_{\mathcal{H}}(P)$ must be to guarantee that $\Delta P$ has positive (one-dimensional) Lebesgue/Hausdorff measure. The conjecture is that one needs

$$dim_{\mathcal{H}}(P) > \frac{d}{2}.$$

The best known results to date are that $\frac{5}{4}$ for $d = 2$ (\cite{Guth2020-xa}) and $\frac{d}{2} + \frac{1}{4} - \frac{1}{8d + 4}$ for $d \geq 2$ (\cite{Du2023-mf}). This remains an open and vibrant area of research.

In \cite{Iosevich2014}, these problems are connected. Below, we (slightly) generalize that connection and then use the theorems in sections \ref{section:non_randomly_drawn_points} and \ref{section:randomly_drawn_points} to extend the situation where these connections apply.

\begin{definition}
\label{definition:hausdorff_metric} The \textbf{Hausdorff distance} is a metric on closed, non-empty subsets of $\mathbb{R}^d$ defined as

$$d_H(E, F) = max\left\{ \sup_{a\in E}d(a, F), \sup_{b\in Y}d(b, X) \right\}.$$

\end{definition}

\begin{theorem}
\label{theorem:discrete_energy_facloner_erdos} Assume that the Falconer distance problem is true to the degree that there is an $s_0 > d/2$ such that if $E \subset \mathbb{R}^d$ then

$$dim_{\mathcal{H}}(E) > s_0 \Rightarrow \mathcal{L}^1( \Delta E  ) > 0.$$

Under this assumption, if $\{P_n\}$ are subsets of $[0, 1]^d$ such that $\#P_n = n$ and $\{P_n\}$ is $s$-adaptable for some $s > s_0$ then

\begin{equation}
\label{equation:erdos_distance_s_0}
\#(\Delta P_n) \gtrapprox n^{1/s_0}.
\end{equation}
\end{theorem}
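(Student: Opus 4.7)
The plan is a proof by contradiction via a weak-$\star$ limiting procedure: assume $\#(\Delta P_{n_k})$ is too small along some subsequence, produce a limit measure $\mu$ with $dim_{\mathcal{H}}(supp(\mu)) > s_0$, invoke the Falconer hypothesis to obtain $\mathcal{L}^1(\Delta supp(\mu)) > 0$, and then derive $\mathcal{L}^1(\Delta supp(\mu)) = 0$ from the thinness of the discrete distance sets, yielding a contradiction.

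Suppose for contradiction that $\#(\Delta P_{n_k}) \leq n_k^{1/s_0 - \varepsilon}$ along a subsequence $\{n_k\}$ for some $\varepsilon \in (0, 1/s_0)$, and choose $t$ with $s_0 < t < s$ and $1/t > 1/s_0 - \varepsilon$; such a $t$ exists because $\min(s, s_0/(1 - \varepsilon s_0)) > s_0$ whenever $s > s_0$ and $\varepsilon > 0$. Form the ball measures $\kappa_{n_k, c}$ from Definition \ref{definition:ball_measures} with radius $r_{n_k} = c n_k^{-1/t}$. The key uniform estimate is $I_t(\kappa_{n_k, c}) \leq C$ in $k$, which I would obtain by splitting the defining sum into three pieces: the diagonal ($a = b$), which contributes $O(r_{n_k}^{-t}/n_k) = O(c^{-t})$; the well-separated pairs $|a - b| \geq 3 r_{n_k}$, whose total contribution is comparable to $J_t(P_{n_k})$ and is thus uniformly bounded by $s$-adaptability; and the close pairs $|a - b| < 3 r_{n_k}$, each contributing $O(r_{n_k}^{-t}/n_k^2) = O(1/n_k)$, while $s$-adaptability forces their number to be $O(n_k)$, because each such pair already contributes at least $(3 r_{n_k})^{-t} = (3c)^{-t} n_k$ to the bounded sum $n_k(n_k - 1) J_t(P_{n_k}) \leq C_t n_k^2$.

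By Banach--Alaoglu and the uniform boundedness of the supports of the $\kappa_{n_k, c}$ inside a compact neighborhood of $[0,1]^d$, pass to a further subsequence along which $\kappa_{n_k, c} \overset{\star}{\longrightarrow} \mu$. Lower semicontinuity of the Riesz $t$-energy under weak-$\star$ convergence (which follows from $|x-y|^{-t}$ being lower semicontinuous and nonnegative, together with Lemma 1.6.6 of \cite{Borodachov2019}) gives $I_t(\mu) \leq C$, so $dim_{\mathcal{H}}(supp(\mu)) \geq t > s_0$, and the Falconer hypothesis then furnishes $\delta > 0$ with $\mathcal{L}^1(\Delta supp(\mu)) \geq \delta$. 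On the other hand, by the portmanteau theorem every element of $\Delta supp(\mu)$ is a limit of distances $|x_k - y_k|$ with $x_k, y_k \in supp(\kappa_{n_k, c})$, each of which lies within $2 r_{n_k}$ of an element of $\Delta P_{n_k}$; hence
\[
\Delta supp(\mu) \;\subset\; \bigcap_{K \geq 1} \overline{\bigcup_{k \geq K} \bigl(\Delta P_{n_k} + [-2 r_{n_k}, 2 r_{n_k}]\bigr)},
\]
whose Lebesgue measure is at most $\liminf_k 4 r_{n_k} (\#\Delta P_{n_k} + 1) \leq \liminf_k 4 c \bigl(n_k^{1/s_0 - \varepsilon - 1/t} + n_k^{-1/t}\bigr) = 0$ by the choice of $t$, contradicting $\mathcal{L}^1(\Delta supp(\mu)) \geq \delta$. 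The main technical step is the uniform bound $I_t(\kappa_{n_k, c}) \leq C$ from $s$-adaptability alone (rather than from the stronger separation hypothesis built into Theorem \ref{theorem:J_s_vs_I_s_for_ball_measures}), which the close-pair/far-pair splitting above is designed to handle; a secondary concern is carefully justifying the set-theoretic inclusion above via portmanteau applied to $\kappa_{n_k,c} \times \kappa_{n_k,c} \overset{\star}{\longrightarrow} \mu \times \mu$.
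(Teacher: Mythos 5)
Your strategy coincides with the paper's: argue by contradiction, fatten $P_{n_k}$ into the ball measures $\kappa_{n_k,c}$ at scale $r_{n_k}=cn_k^{-1/t}$ for a suitable $t\in(s_0,s)$, extract a weak-$\star$ limit $\mu$, use lower semicontinuity of the energy to get $I_t(\mu)\le C$, conclude $dim_{\mathcal{H}}(supp(\mu))\ge t>s_0$, and then play the Falconer hypothesis off against the assumed smallness of $\#(\Delta P_{n_k})$. Your close-pair/far-pair splitting proving the uniform bound $I_t(\kappa_{n_k,c})\le C$ from $s$-adaptability alone is correct (diagonal $O(c^{-t})$, separated pairs comparable to $J_t(P_{n_k})$, and at most $O(n_k)$ close pairs each contributing $O(1/n_k)$), and it is in fact more careful than the corresponding step in the paper, which asserts the bound without the separation hypothesis built into Theorem \ref{theorem:J_s_vs_I_s_for_ball_measures}. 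The inclusion $\Delta supp(\mu)\subset\bigcap_{K\ge 1}\overline{\bigcup_{k\ge K}\bigl(\Delta P_{n_k}+[-2r_{n_k},2r_{n_k}]\bigr)}$ is also correctly justified via portmanteau.

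The genuine gap is the measure estimate you draw from that inclusion. The Lebesgue measure of a Kuratowski upper limit (an intersection over $K$ of closures of \emph{infinite} tail unions) is not bounded by $\liminf_k 4r_{n_k}(\#\Delta P_{n_k}+1)$: each tail union involves infinitely many $k$, so at best one could try $\sum_{k\ge K}4r_{n_k}(\#\Delta P_{n_k}+1)$, which need not even be finite since the subsequence $\{n_k\}$ is arbitrary, and the closure can enlarge the measure further. For sets of exactly the shape you use (finitely many intervals of vanishing total length) the claimed inequality fails badly: take $A_k$ to be $\tfrac1k\mathbb{Z}\cap[0,2]$ fattened by $4^{-k}$, so $\mathcal{L}^1(A_k)\to 0$, yet every tail union is dense in $[0,2]$ and $\bigcap_K\overline{\bigcup_{k\ge K}A_k}=[0,2]$ has measure $2$. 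Since this is precisely the step where the contradiction is supposed to appear, the proof does not close as written. The tension has to be resolved at a single index: the paper instead passes to a subsequence along which the supports $K_{n_j}$ converge in the Hausdorff metric, invokes $\Delta K_{n_j}\overset{d_H}{\longrightarrow}\Delta K_0$ from \cite{Iosevich2014}, and compares $\mathcal{L}^1(\Delta K_0)$ with a covering of $\Delta K_{n_j}$ by $\#(\Delta P_{n_j})$ intervals of length comparable to $n_j^{-1/t}$ for one large $j$ at a time. If you repair your argument along those lines, note the delicate point (present in the paper's write-up as well): weak-$\star$ or Hausdorff convergence alone gives no \emph{rate}, so you must justify why the neighborhood of $\Delta P_{n_k}$ that covers $\Delta supp(\mu)$ can be taken of width comparable to $r_{n_k}$ for the same $n_k$, rather than of some uncontrolled width $\alpha$ with $\#(\Delta P_{n_k})\cdot\alpha$ large.
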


\begin{proof}
The beginning of this proof is exactly the same as the proof of Theorem 3.4(a) in \cite{Iosevich2014}. Assume that equation (\ref{equation:erdos_distance_s_0}) does not hold which means that there is some $\varepsilon > 0$ and a subsequence $\{n_j\}$ such that

\begin{equation}
\label{equation:erods_contradiction}
\#(\Delta P_{n_j}) \lesssim n_j^{1/s_0 - \varepsilon}.
\end{equation}

Pick at $t \in (s_0, s)$ such that $1/s_0 - \varepsilon < 1/t$. We know that $I_t(P_{n_j})$ is uniformly bounded by some $C_t$. We take $\kappa_{n_j, 1}$ from equation \ref{equation:ball_measures} and let $K_{n_j} = supp(\kappa_{n_j}) \subset [-1, 2]^d$. We can pass to a subsequence (which we still call $K_{n_j}$) such that, for some $K_0$

$$K_{n_j} \overset{d_H}{\longrightarrow} K_0$$

and then to another subsequence (which, again, we still call $K_{n_j}$) such that for some $\kappa_0$

$$\kappa_{n_j} \overset{\star}{\longrightarrow} \kappa_0.$$

Also, as shown in \cite{Iosevich2014}, we know that

$$\Delta K_{n_j} \overset{d_H}{\longrightarrow} \Delta K_0.$$

Since $I_t(\kappa_{n_j})$ are uniformly bounded by $C_t < \infty$ we know that $I_t(\kappa_0) \leq C_t < \infty$ which means that $K_0 = supp(\kappa_0)$ has Hausdorff dimension at least $t > s_0$. Using our assumption about the Falconer distance problem, we get that

$$\mathcal{L}^1(\Delta K_0) = L > 0 \Rightarrow \mathcal{L}^1(\Delta K_{n_j}) \to L.$$

Let $\delta \in (0, 1)$ and so, for $n_j$ large enough, we know that

$$\mathcal{L}^1(\Delta K_{n_j}) > \delta L.$$

Recall that $K_{n_j}$ is just $P_{n_j}$ where each point is replaced by a ball of radius $n_j^{-1/t}$ and, so, we know that

$$\# (\Delta P_{n_j}) \geq \frac{\mathcal{L}^1(\Delta K_{n_j})}{2n_j^{-1/t}}$$

and so for big enough $n_j$

$$\# (\Delta P_{n_j}) \geq \frac{\delta L}{2n_j^{-1/t}}.$$

If equation \ref{equation:erods_contradiction} holds, however, we'd get that

$$\frac{\delta  L}{2n_j^{-1/t}} \lesssim n^{1/s_0 - \varepsilon} \Rightarrow L \lesssim n^{1/s_0 - \varepsilon - 1/t} \overset{n \to \infty}{\longrightarrow} 0$$

since we chose $t$ such that $1/s_0 - \varepsilon - 1/t < 0$. This gets us $L = 0$ which is a contradiction.
\end{proof}

With these connections we can use theorem \ref{theorem:law_large_numbers} to obtain a corollary.

\begin{corollary}
\label{corollary:prob_falconer_erdos}
Assume that the Falconer distance problem is true to the degree that there is an $s_0 > d/2$ such that if $E \subset \mathbb{R}^d$ then

\begin{equation}
\label{equation:falconer_type_assumption}
dim_{\mathcal{H}}(E) > s_0 \Rightarrow \mathcal{L}^1( \Delta E  ) > 0.
\end{equation}

Under this assumption, let $\left( [0, 1]^d, \mathcal{B}, \mu \right)$ be a Borel probability space on $\mathbb{R}^d$, $\{X_n\}$ be IID random variables drawn via $\mu$, that generate $\{P_n\}$. If $I_s(\mu) < \infty$ for some $s > s_0$ then

$$\#(\Delta P_n) \gtrapprox n^{1/s_0}.$$
\end{corollary}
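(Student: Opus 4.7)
The plan is to reduce this corollary to the deterministic Theorem \ref{theorem:discrete_energy_facloner_erdos} by using the strong law of large numbers (Theorem \ref{theorem:strong_law_of_large_numbers}) to show that, almost surely, the random sequence $\{P_n\}$ is itself $s$-adaptable for the fixed $s > s_0$. Once $s$-adaptability holds on a probability-one event, the deterministic conclusion transfers pointwise on that event, yielding $\#(\Delta P_n) \gtrapprox n^{1/s_0}$.

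First I would take care of some bookkeeping. Because $s > 0$ and $I_s(\mu) < \infty$, the measure $\mu$ must be atomless (an atom of mass $c$ would force $I_s(\mu) \geq c^2 \cdot |x-x|^{-s} = \infty$), so almost surely the $X_n$ are pairwise distinct and $\#P_n = n$; the containment $P_n \subset [0,1]^d$ is immediate from the sampling distribution. The core step is then to upgrade the fixed-$t$ almost-sure convergence $J_t(P_n) \to I_t(\mu)$ supplied by Theorem \ref{theorem:strong_law_of_large_numbers} into the uniform-in-$t$ condition $J_t(P_n) \lesssim_t 1$ for all $t < s$ that $s$-adaptability demands. I would pick a countable sequence $t_k \nearrow s$ with $t_k < s$. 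Splitting the defining integral at $|x-y|=1$ and using $P_n \subset [0,1]^d$ gives $I_{t_k}(\mu) \leq I_s(\mu) + 1 < \infty$, so Theorem \ref{theorem:strong_law_of_large_numbers} furnishes a probability-one event $\Omega_k$ on which $J_{t_k}(P_n) \to I_{t_k}(\mu)$, and in particular $\sup_n J_{t_k}(P_n) < \infty$. Setting $\Omega_0 = \bigcap_k \Omega_k$, which still has probability one, and picking for each $t < s$ some $t_k \in (t,s)$, the bound $|x-y|^{-t} \leq (\sqrt{d})^{t_k - t} |x-y|^{-t_k}$ (valid because all pairwise distances in $[0,1]^d$ are at most $\sqrt{d}$) yields
$$J_t(P_n) \leq (\sqrt{d})^{t_k - t} J_{t_k}(P_n) \lesssim_t 1$$
on $\Omega_0$. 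Hence $\{P_n\}$ is $s$-adaptable on $\Omega_0$, and a direct application of Theorem \ref{theorem:discrete_energy_facloner_erdos} on that event finishes the argument.

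The main obstacle is precisely this uniformity issue: the exceptional null set in Theorem \ref{theorem:strong_law_of_large_numbers} depends on the exponent, and a priori the union over all $t < s$ of these null sets could fail to be controlled. The boundedness of $[0,1]^d$ combined with the standard countable-intersection trick along a dense sequence $t_k \nearrow s$ resolves this cleanly, after which the proof is essentially a quotation of Theorem \ref{theorem:discrete_energy_facloner_erdos} together with the observation that $\mu$ is atomless.
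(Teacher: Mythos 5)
Your argument is correct and follows the route the paper intends for this corollary: invoke the strong law of large numbers (Theorem \ref{theorem:strong_law_of_large_numbers}) to get almost-sure $s$-adaptability of the sampled sets and then apply the deterministic Theorem \ref{theorem:discrete_energy_facloner_erdos}. The paper leaves the details implicit, and your write-up supplies exactly the right ones (atomlessness of $\mu$ from $I_s(\mu)<\infty$, finiteness of $I_t(\mu)$ for $t<s$ on $[0,1]^d$, and the countable grid $t_k \nearrow s$ with the $(\sqrt{d})^{t_k-t}$ comparison to handle all $t<s$ on a single full-measure event).
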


In the case that $\{P_n\}$ is an $s$-adaptable sequence of sets such that $\# P_n = n$ (such as those described above) then, using the results in \cite{Guth2020-xa} and \cite{Du2023-mf} we obtain

$$\#(\Delta P_n) \gtrapprox n^{\frac{4}{5}}$$

for $d = 2$ and 

$$\#(\Delta P_n) \gtrapprox n^{\left(\frac{d}{2} + \frac{1}{4} - \frac{1}{8d + 4}\right)^{-1}} = n^{\frac{4(2d + 1)}{(2d + 1)^2 - 1}}$$

for $d \geq 3$.

\subsection{Connecting more general Erd\H{o}s/Falconer type problems}

The questions asked by the Erd\H{o}s and Falconer distance problems can be varied in interesting ways. One way this is often done is by replacing $| x - y |$ with a different $\phi:\mathbb{R}^d \times \mathbb{R}^d \to \mathbb{R}$ where for a $E \subset \mathbb{R}^d$ we let

$$\Phi(E) = \phi(E, E)$$

For example, it is shown in \cite{ESWARATHASAN20112385} that if $\phi(x, y)$ is continuous and smooth away from the diagonal and if the Monge-Ampere determinant,

$$det \begin{pmatrix}
    0 & \nabla_x \phi \\
    -(\nabla_y \phi)^T & \frac{\partial^2 \phi}{\partial x_i \partial y_j}
\end{pmatrix}$$

is non-vanishing on non-zero level sets, i.e. those of the form $\{(x, y) \in \mathbb{R}^d \times \mathbb{R}^d : \phi(x, y) = t\}$ for $t \neq 0$, then for compact $E \subset \mathbb{R}^d$ with $d > 2$, if $dim_{\mathcal{H}}(E) > \frac{d + 1}{2}$ then $\Phi(E)$ has positive Lebesgue measure.

The specific situation of $\phi(x, y) = x \cdot y$ (the standard dot product) has been studied a great deal e.g. \cite{Iosevich2021-qf} and \cite{MR4196055}. More recently, \cite{bright2024pinneddotproductset} discusses up-to-date results regarding when $\Phi(E)$ has positive Lebesgue measure as well as new results relating to the Hausdorff dimension of $\Phi(E)$ and when $\Phi(E)$ has non-empty interior.

The argument for theorem \ref{theorem:discrete_energy_facloner_erdos} above can be generalized.

\begin{theorem}
\label{theorem:discrete_energy_falconer_erdos_general}
Let $\phi:\mathbb{R}^d \times \mathbb{R}^d \to \mathbb{R}$ be locally Lipschitz with Lipschitz constant $\lambda_d$ (i.e. the Lipschitz constant can depend on $d$). Assume a Falconer-type bound is true i.e. there is some $s_0$ such that if $E \subset \mathbb{R}^d$ then

$$dim_{\mathcal{H}}(E) > s_0 \Rightarrow \mathcal{L}^1(\Phi(E)) > 0.$$

Under this assumption, if $\{P_n\}$ are subsets of $[0, 1]^d$ such that $\#P_n = n$ and $\{P_n\}$ is $s$-adaptable for some $s > s_0$ then

\begin{equation}
\label{equation:erdos_general_s_0}
\#(\Phi(P_n)) \gtrapprox n^{1/s_0}.
\end{equation}
\end{theorem}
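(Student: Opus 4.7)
The plan is to mimic the proof of Theorem~\ref{theorem:discrete_energy_facloner_erdos}, using the local Lipschitz property of $\phi$ in place of the triangle inequality and the continuity of $\phi$ in place of the continuity of the Euclidean distance. Suppose for contradiction that (\ref{equation:erdos_general_s_0}) fails. Then there exist $\varepsilon > 0$ and a subsequence $\{n_j\}$ such that $\#(\Phi(P_{n_j})) \lesssim n_j^{1/s_0 - \varepsilon}$. Since $s > s_0$, we can choose $t \in (s_0, s)$ with $1/s_0 - \varepsilon < 1/t$.

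Next I would form the ball measures $\kappa_{n_j, 1}$ from Definition~\ref{definition:ball_measures} at scale $n_j^{-1/t}$, and set $K_{n_j} = \mathrm{supp}(\kappa_{n_j, 1})$. Because $P_{n_j} \subset [0,1]^d$, every $K_{n_j}$ lies in a fixed compact box $Q \subset \mathbb{R}^d$ (e.g.\ $[-1,2]^d$ for all large $n_j$). Using $s$-adaptability of $\{P_n\}$ together with Theorem~\ref{theorem:J_s_vs_I_s_for_ball_measures}, the energies $I_t(\kappa_{n_j, 1})$ are uniformly bounded by some $C_t < \infty$. By Blaschke selection, pass to a subsequence with $K_{n_j} \to K_0$ in the Hausdorff metric, and by Banach-Alaoglu pass again so that $\kappa_{n_j, 1} \overset{\star}{\longrightarrow} \kappa_0$ for some $\kappa_0 \in \mathcal{M}(K_0)$. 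Lower semicontinuity of the energy (or Fatou) then gives $I_t(\kappa_0) \leq C_t < \infty$, so $\dim_\mathcal{H}(K_0) \geq t > s_0$.

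Now I would invoke the Falconer-type hypothesis on $K_0$ to obtain $\mathcal{L}^1(\Phi(K_0)) = L > 0$. Since $\phi$ is continuous on the compact set $Q \times Q$, the map $E \mapsto \Phi(E)$ (from closed subsets of $Q$ to closed subsets of $\mathbb{R}$, in the Hausdorff metric) is continuous; hence $\Phi(K_{n_j}) \to \Phi(K_0)$ in Hausdorff distance, and $\mathcal{L}^1(\Phi(K_{n_j})) > \delta L$ for any fixed $\delta \in (0,1)$ and all sufficiently large $n_j$. Finally, I would use the Lipschitz hypothesis: for any $a, b \in P_{n_j}$ and $x \in B(a, n_j^{-1/t})$, $y \in B(b, n_j^{-1/t})$, the local Lipschitz constant $\lambda_d$ of $\phi$ on $Q \times Q$ yields $|\phi(x,y) - \phi(a,b)| \leq 2 \lambda_d n_j^{-1/t}$. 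Therefore $\Phi(K_{n_j})$ lies in a $2\lambda_d n_j^{-1/t}$-neighborhood of the finite set $\Phi(P_{n_j})$, so
$$\#(\Phi(P_{n_j})) \geq \frac{\mathcal{L}^1(\Phi(K_{n_j}))}{4\lambda_d n_j^{-1/t}} \geq \frac{\delta L}{4\lambda_d n_j^{-1/t}}.$$
Combined with the assumed upper bound $\#(\Phi(P_{n_j})) \lesssim n_j^{1/s_0 - \varepsilon}$, this forces $L \lesssim n_j^{1/s_0 - \varepsilon - 1/t} \to 0$, contradicting $L > 0$.

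The main obstacle I expect is verifying the Hausdorff continuity $K_{n_j} \to K_0 \Rightarrow \Phi(K_{n_j}) \to \Phi(K_0)$ carefully: one needs both that every limit of values $\phi(x_{n_j}, y_{n_j})$ with $x_{n_j}, y_{n_j} \in K_{n_j}$ lies in $\Phi(K_0)$ (which is straightforward from continuity and compactness), and that every value $\phi(x,y)$ for $x, y \in K_0$ is approximated by values in $\Phi(K_{n_j})$ (which uses Hausdorff convergence of the $K_{n_j}$ and uniform continuity of $\phi$ on $Q \times Q$). Everything else is a direct transcription of the proof of Theorem~\ref{theorem:discrete_energy_facloner_erdos} with $|\cdot - \cdot|$ replaced by $\phi$ and a factor of $\lambda_d$ absorbed into the constants.
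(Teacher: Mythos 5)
Your proposal is correct and follows essentially the same route as the paper's proof: contradiction via the ball measures $\kappa_{n_j,1}$, compactness to extract Hausdorff and weak-$\star$ limits $K_0$ and $\kappa_0$, lower semicontinuity of $I_t$ to get $\dim_{\mathcal{H}}(K_0) \geq t > s_0$, the Falconer-type hypothesis on $K_0$, and the Lipschitz bound $|\phi(x,y)-\phi(a,b)| \leq 2\lambda_d n_j^{-1/t}$ to convert $\mathcal{L}^1(\Phi(K_{n_j})) \geq \delta L$ into the count $\#\Phi(P_{n_j}) \gtrsim n_j^{1/t}$. The only step you (rightly) flag as delicate --- passing from $K_{n_j} \overset{d_H}{\longrightarrow} K_0$ to the lower bound on $\mathcal{L}^1(\Phi(K_{n_j}))$ --- is treated at the same level of detail in the paper itself, which handles the set convergence via the inclusion $\Phi(G_\alpha) \subseteq \Phi(G)_{2\alpha\lambda_d}$ rather than uniform continuity, a cosmetic difference.
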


\begin{proof}
The proof is largely the same as the proof of theorem \ref{theorem:discrete_energy_facloner_erdos}. Assume that equation \ref{equation:erdos_general_s_0} does not hold which means that there is some $\varepsilon > 0$ and a $\{n_j\}_{j = 1}^{\infty}$ such that

\begin{equation}
\label{equation:erods_contradiction_s_0}
\#(\Phi( P_{n_j})) \lesssim n^{1/s_0 - \varepsilon}.
\end{equation}

Pick at $t \in (s_0, s)$ such that $1/s_0 - \varepsilon < 1/t$. We know that $I_t(P_{n_j})$ is uniformly bounded by some $C_t$. We take $\kappa_{n_j, 1}$ from equation \ref{equation:ball_measures} and let $K_{n_j} = supp(\kappa_{n_j}) \subset [-1, 2]^d$. We can pass to a subsequence (which we still call $K_{n_j}$) such that, for some $\widetilde{K_0}$

$$K_{n_j} \overset{d_H}{\longrightarrow} \widetilde{K_0}$$

and then to another subsequence (which, again, we still call $K_{n_j}$) such that for some $\kappa_0$

$$\kappa_{n_j} \overset{\star}{\longrightarrow} \kappa_0.$$

We can then see that $\phi(K_{n_j}) \to \phi(K_0)$. To see this, note that if $F_n \overset{H}{\longrightarrow} F$ then for any $\alpha > 0$, and letting $(S)_{\alpha}$ denote the $\alpha$-neighborhood of $S \subset \mathbb{R}^d$, if $n$ is large enough, then $(F)_{\alpha} \supseteq F_n$ and $(F_n)_{\alpha} \supseteq F$. Thus, $\phi(F_{\alpha}) \supseteq \phi(F_n)$ and $\phi((F_n)_{\alpha}) \supseteq \phi(F)$. Now, we note that for some set $G$ and sufficiently small $\alpha$,

$$\Phi(G_{\alpha})
= \bigcup_{x, y \in G} \phi(B_{\alpha}(x), B_{\alpha}(y))
\subset \bigcup_{x, y \in G} \phi(x, y)_{2 \alpha \lambda_d}
= \Phi(G)_{2 \alpha \lambda_d}.$$

Since $I_t(\kappa_{n_j})$ are uniformly bounded by $C_t < \infty$ we know that $I_t(\kappa_0) \leq C_t < \infty$ which means that $K_0 = supp(\mu_0)$ has Hausdorff dimension at least $t > s_0$. Using our assumption in equation \ref{equation:falconer_type_assumption}

$$\mathcal{L}^1(\Phi(K_0)) = L > 0 \Rightarrow \mathcal{L}^1(\Phi(K_{n_j})) \to L.$$

Let $\delta \in (0, 1)$ and so, for $n_j$ large enough, we know that

$$\mathcal{L}^1(\Phi K_{n_j}) > \delta L.$$

Recall that $K_{n_j}$ is just $P_{n_j}$ where each point is replaced by a ball of radius $n_j^{-1/t}$ and, so, we know that, for big enough $n_j$

$$\# (\Phi P_{n_j}) \geq \frac{\delta L}{2 \lambda_d n_j^{-1/t}}.$$

If equation \ref{equation:erods_contradiction_s_0} holds, however, we'd get that

$$\frac{\delta  L}{2 \lambda n_j^{-1/t}} \lesssim n^{1/s_0 - \varepsilon} \Rightarrow L \lesssim n^{1/s_0 - \varepsilon - 1/t} \to 0$$

since we chose $t$ such that $1/s_0 - \varepsilon - 1/t < 0$. This gets us $L = 0$ which is a contradiction.

\end{proof}

We can apply theorem \ref{theorem:discrete_energy_falconer_erdos_general} to $\phi(x, y) = x \cdot y$.

\begin{corollary}
\label{corollary:discrete_energy_falconer_erdos_dot_product}
For a set $E \subset \mathbb{R}^d$, let $\Pi(E) = \{x \cdot y : x, y \in E\}$.  Assume that the ``Falconer dot product problem'' is true to the degree that there is an $s_0 > d/2$ such that if $E \subset \mathbb{R}^d$ then

$$dim_{\mathcal{H}}(E) > s_0 \Rightarrow \mathcal{L}^1( \Pi(E)  ) > 0.$$

Under this assumption, if $\{P_n\}$ are subsets of $[0, 1]^d$ such that $\#P_n = n$ and $\{P_n\}$ is $s$-adaptable for some $s > s_0$ then

\begin{equation}
\label{equation:erdos_general_s_0_result}
\#(\Pi(P_n)) \gtrapprox n^{1/s_0}.
\end{equation}
\end{corollary}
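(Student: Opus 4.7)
The plan is to deduce this corollary directly from Theorem \ref{theorem:discrete_energy_falconer_erdos_general} by verifying its hypotheses for the dot product. The only nontrivial point is that the dot product is not globally Lipschitz on $\mathbb{R}^d \times \mathbb{R}^d$, but since the sets $P_n$ (and all the relevant blow-ups $K_{n_j} \subset [-1,2]^d$ appearing in the proof of the general theorem) lie in a bounded region depending only on $d$, local Lipschitz continuity with a constant $\lambda_d$ depending only on $d$ is what is actually required, and this is exactly how Theorem \ref{theorem:discrete_energy_falconer_erdos_general} is phrased.

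The computation I would perform is the standard bilinear estimate: for $x, x', y, y' \in [-1, 2]^d$,
\begin{align*}
|x \cdot y - x' \cdot y'|
& \leq |x \cdot y - x \cdot y'| + |x \cdot y' - x' \cdot y'| \\
& \leq |x| \, |y - y'| + |y'| \, |x - x'| \\
& \leq 2\sqrt{d} \, \bigl( |x - x'| + |y - y'| \bigr),
\end{align*}
so $\phi(x,y) = x \cdot y$ is Lipschitz on $[-1,2]^d \times [-1,2]^d$ with a constant $\lambda_d = 2\sqrt{d}$ depending only on $d$. This is the only ingredient needed beyond what is already in Theorem \ref{theorem:discrete_energy_falconer_erdos_general}.

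With this in hand, the argument is essentially one line: the hypothesis on the ``Falconer dot product problem'' is exactly the Falconer-type bound assumed in Theorem \ref{theorem:discrete_energy_falconer_erdos_general} with $\phi(x,y) = x \cdot y$ and $\Phi(E) = \Pi(E)$, the sets $\{P_n\}$ are assumed to be $s$-adaptable with $s > s_0$ and contained in $[0,1]^d$, and $\phi$ is locally Lipschitz on a neighborhood of $[0,1]^d \times [0,1]^d$ (large enough to contain the ball-thickenings $K_{n_j}$ used in the proof of Theorem \ref{theorem:discrete_energy_falconer_erdos_general}) with Lipschitz constant depending only on $d$. Applying Theorem \ref{theorem:discrete_energy_falconer_erdos_general} then yields exactly the conclusion $\#(\Pi(P_n)) \gtrapprox n^{1/s_0}$.

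I do not anticipate any genuine obstacle here; the corollary is really a worked example of the general theorem, and the only point requiring a small verification is the local Lipschitz bound above. If one wished to be more careful, one could also note that the same argument works verbatim with $[0,1]^d$ replaced by any fixed compact set, and with any bilinear or smooth $\phi$ whose derivatives are bounded on that compact set.
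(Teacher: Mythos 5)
Your proposal is correct and follows essentially the same route as the paper: verify that $\phi(x,y)=x\cdot y$ is locally Lipschitz on the relevant bounded region (the paper bounds $|(x+u)\cdot(y+v)-x\cdot y|$ for small perturbations $u,v$, you give the equivalent bilinear Lipschitz estimate on $[-1,2]^d$) and then invoke Theorem \ref{theorem:discrete_energy_falconer_erdos_general}. In fact your estimate is slightly more careful with the $\sqrt{d}$ factors than the paper's, but the argument is the same.
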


\begin{proof}
For $x, y \in [0, 1]^d$, $\alpha \in [0, 1]$, and $u, v \in B_{\alpha}(0)$

$$|(x + u) \cdot (y + v) - x \cdot y| = |x \cdot v + y \cdot u + u \cdot v| \leq \alpha^2 + 2\alpha \leq 3 \alpha.$$

So, applying theorem \ref{theorem:discrete_energy_falconer_erdos_general}, we get the desired result.
\end{proof}

In addition, we can get a corollary similar to corollary \ref{corollary:prob_falconer_erdos} for the dot product.

\printbibliography

\end{document}